\documentclass[a4paper,12pt]{amsart}
\usepackage{amssymb}
\usepackage{xypic}
\usepackage[all,cmtip]{xy}
\usepackage{amsmath}
\usepackage{chemarrow}
\usepackage[colorlinks, 
pdfborder=001, 
linkcolor=cyan,
anchorcolor=magenta
citecolor=green
]{hyperref}
\usepackage{mathrsfs}
\usepackage{wasysym}
\usepackage{titletoc}
\usepackage{bm}
\usepackage{bbm}
\usepackage{bbold} 
\usepackage{cite}
\usepackage{subfiles}
\usepackage{extarrows} 
\usepackage{geometry}
\geometry{left=2cm,right=2cm,top=2.5cm,bottom=2.5cm}
\usepackage{colordvi}
\usepackage{color}
\usepackage{stmaryrd}

\allowdisplaybreaks

\DeclareMathOperator{\Aut}{Aut}

\DeclareMathOperator{\Coker}{Coker}
\DeclareMathOperator{\D}{\mathcal{D}}

\DeclareMathOperator{\End}{End}
\DeclareMathOperator{\Ext}{Ext}

\DeclareMathOperator{\GKdim}{GKdim}

\DeclareMathOperator{\gld}{gldim}

\DeclareMathOperator{\GrAut}{GrAut}

\DeclareMathOperator{\GrMod}{GrMod}

\DeclareMathOperator{\hdet}{hdet}

\DeclareMathOperator{\Hom}{Hom}

\DeclareMathOperator{\id}{id}

\DeclareMathOperator{\Kdim}{Kdim}

\DeclareMathOperator{\kk}{\Bbbk}

\DeclareMathOperator{\m}{\mathfrak{m}}

\DeclareMathOperator{\N}{\mathbb{N}}

\DeclareMathOperator{\op}{op}
\DeclareMathOperator{\p}{\mathfrak{p}}

\DeclareMathOperator{\RHom}{\textbf{R}Hom}
\DeclareMathOperator{\rhu}{\rightharpoonup}

\DeclareMathOperator{\SL}{\mathrm{SL}}
\DeclareMathOperator{\Spec}{Spec}

\DeclareMathOperator{\Tr}{Tr}
\DeclareMathOperator{\vep}{\varepsilon}

\DeclareMathOperator{\Z}{\mathbb{Z}}

\newcommand{\To}{\longrightarrow}

\numberwithin{equation}{section}

\theoremstyle{definition}

\newtheorem{thm}{Theorem}[section]
\newtheorem{prop}[thm]{Proposition}
\newtheorem{lem}[thm]{Lemma}

\newtheorem{defn}[thm]{Definition}

\newtheorem{ex}[thm]{Example}

\newtheorem{conj}[thm]{Conjecture}

\begin{document}

\title{Auslander theorem for PI Artin-Schelter regular algebras}

\author{Ruipeng Zhu}
\address{School of Mathematics, Shanghai University of Finance and Economics, Shanghai 200433, China}
\email{zhuruipeng@sufe.edu.cn}

\begin{abstract}
	We prove a version of a theorem of Auslander for finite group actions or coactions on noetherian polynomial identity Artin-Schelter regular algebra.
\end{abstract}
\subjclass[2020]{
	16W22, 
	16T05, 
	16E65
}

\keywords{Auslander theorem, Polynomial identity, Artin-Schelter regular algebra}


\maketitle



\section*{Introduction}

Throughout, $\kk$ is an algebraically closed field of characteristic zero. All vector spaces, algebras and Hopf algebras are over $\kk$.

A classical theorem of Maurice Auslander 
states that if $G$ is a finite subgroup of $\SL_n(\kk)$, acting linearly on the commutative polynomial ring $R = \kk[x_1, \dots, x_n]$ with invariant subring $R^G$, then the natural map from the skew group algebra $R\#G$ to $\End_{R^G}(R)$ is an isomorphism of graded algebras.
The Auslander theorem is a fundamental result in the study of the McKay correspondence.
Recently, some researchers have studied the Auslander theorem and McKay correspondence in the noncommutative setting, see \cite{BHZ2018, BHZ2019, CKWZ2018, CKWZ2019, CKZ2020, GKMW2019, Mor2013}.
One of the main open questions concerning a noncommutative version of Auslander theorem is the following conjecture that was stated in \cite[Conjecture 0.2]{CKWZ2018}.

\begin{conj}
	Let $R$ be a noetherian connected graded Artin-Schelter regular algebra \cite{AS1987} and $H$ be a semisimple Hopf algebra acting homogeneously and inner faithfully on $R$. 
	If the homological determinant of the $H$-action on $R$ is trivial, then the Auslander map
	$$\varphi: R\#H \To \End_{R^H}(R), \qquad r\#h \mapsto \big(x \mapsto r (h \rhu x)\big)$$
	 is an isomorphism of graded algebras.
\end{conj}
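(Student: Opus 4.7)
My plan is to proceed via the pertinency criterion: by results of Bao--He--Zhang (and their Hopf-algebraic extension in CKWZ), under the stated hypotheses the Auslander map is an isomorphism if and only if $\pc(R, H) \geq 2$, where the pertinency is defined as the largest $d$ with $\GKdim\bigl((R\#H)/(R\#H)(1-e)(R\#H)\bigr) \leq \GKdim R - d$ for $e \in H$ a normalized integral. The first step is therefore to verify that this criterion applies here, using that $R^H$ is noetherian AS Gorenstein (which follows from trivial hdet by J\o rgensen--Zhang and Kirkman--Kuzmanovich--Zhang) and that $R$ is Cohen--Macaulay as an $R^H$-module. Injectivity of $\varphi$ should follow separately from inner faithfulness: any nontrivial $H$-stable graded ideal in $\ker\varphi$ would produce a proper Hopf quotient of $H$ through which the action factors.

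The second step is to reduce surjectivity of $\varphi$ to this pertinency bound. Since $H$ is semisimple and hdet is trivial, the Reynolds-type trace $R \to R^H$, $r \mapsto e \rhu r$, is surjective, and shifted traces produce a supply of elements in $\Hom_{R^H}(R, R^H)$. Combined with the AS Gorenstein structure of $R^H$, this forces the image of $\varphi$ and $\End_{R^H}(R)$ to coincide outside a locus of codimension $\pc(R, H)$, so the theorem reduces to showing this codimension is at least two.

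The main obstacle, and the reason the conjecture remains open in full generality, is establishing $\pc(R, H) \geq 2$ without further hypotheses on $R$. The commutative prototype is Chevalley--Shephard--Todd: trivial hdet excludes pseudo-reflections and forces the fixed locus to have codimension at least two. I would seek a noncommutative analogue by first showing that trivial hdet rules out ``reflection-type'' elements of $H$ (those whose fixed subring has codimension one in GK-dimension), and then using inner faithfulness to prevent the remaining fixed subrings from collectively dominating a codimension-one subscheme. Converting these two geometric facts into the required GK-bound on $(R\#H)/(R\#H)(1-e)(R\#H)$ is where the argument must break new ground: for PI algebras one can localize at height-one primes of the centre and imitate the commutative proof, but for general noetherian AS regular $R$ there is no prime spectrum to localize over, and one needs a genuinely new technique---such as a Serre-duality argument on the graded derived category of $R\#H$, or an $\Ext$-computation via the Koszul structure of $R$---to control the GK-dimension of the quotient.
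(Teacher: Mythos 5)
The statement you are addressing is stated in the paper as a \emph{conjecture} (Conjecture 0.2 of \cite{CKWZ2018}); the paper does not prove it in this generality, and neither does your proposal. You are candid about this: your plan reduces everything to the pertinency bound $\mathrm{p}(R,H)\geq 2$ via the Bao--He--Zhang criterion and then states that establishing this bound ``without further hypotheses on $R$'' is where ``the argument must break new ground.'' That is exactly the gap --- the reduction to pertinency is already in the literature (it is the theorem of \cite{BHZ2019} quoted in the introduction, and it additionally requires $R$ to be Cohen--Macaulay, a hypothesis you would need to either assume or remove), so the content of your proposal beyond the known reduction is a research programme, not a proof. Two smaller points: the pertinency is defined via the two-sided ideal generated by $e=1\#t$, i.e.\ $\GKdim R - \GKdim\bigl(R\#H/(1\#t)\bigr)$, not by the ideal generated by $1-e$; and injectivity of $\varphi$ does not follow formally from inner faithfulness alone --- in the paper it is deduced from primeness of $R\#H$, which in turn requires a genuine argument (Galois theory of the quotient division ring, Theorem \ref{group-acts-on-division-algebra} and Lemma \ref{group-graded-division-algebra}) available only for $H=\kk G$ or $(\kk G)^*$.

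What the paper actually proves is the special case where $R$ is PI and $H$ is a group algebra or its dual (Theorem \ref{Auslander-thm-for-PI-rings}), and there its route differs from yours in the machinery: instead of bounding $\GKdim$ of $B/BeB$ directly, it uses Buchweitz's grade criterion (Proposition \ref{grade-Auslander-map}) together with the factorization of $\varphi$ through $\gamma: Be\otimes_A\Hom_A(Be,A)\to\End_A(Be)$; the bijectivity of $\nu$ (equivalently of $\widetilde{t}: R\to\Hom_{R^H}(R,R^H)$) is extracted from the trivial homological determinant via balanced dualizing complexes; and the grade bound $j(\Coker\gamma)\geq 2$ is obtained by localizing at height-one primes of $Z(R)\cap R^H$, where $R_{\p}$ becomes a Dedekind prime ring, and then invoking the Cohen--Macaulay property from \cite{SZ1994}. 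Your closing remark that ``for PI algebras one can localize at height-one primes of the centre and imitate the commutative proof'' is therefore pointing at precisely the paper's contribution, but as written your proposal neither carries out that case nor closes the general one.
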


It is worth pointing out that the Hopf action with trivial homological determinant generalizes the group action by a subgroup of $\SL_n(\kk)$.

The above conjecture holds when $R$ has global dimension two \cite[Theorem 0.3]{CKWZ2018}.
Bao, He and Zhang proved that the Auslander map is related to an invariant of the $H$-action on $R$ known as the pertinency.
The {\it pertinency} of the $H$-action on $R$ is defined to be
$$\mathrm{p}(R, H) = \GKdim R - \GKdim R\#H / (1\#t)$$
where $t$ is a nonzero integral of $H$, see \cite[Definition 0.1]{BHZ2019}.

\begin{thm}\cite[Theorem 0.3]{BHZ2019}
	Let $R$ be a noetherian, connected graded Artin-Schelter regular, Cohen-Macaulay algebra of Gelfand-Kirillov dimension at least two. Let $H$ be a semisimple Hopf algebra acting on $R$ homogeneously and inner faithfully. Then the Auslander map is an isomorphism if and only if $\mathrm{p}(R, H) \geq 2$.
\end{thm}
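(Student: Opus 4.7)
My strategy is to analyze the kernel $K=\Ker\varphi$ and the cokernel $C=\Coker\varphi$ of the Auslander map, and to deduce that, under the hypotheses, both vanish precisely when $\mathrm{p}(R,H)\geq 2$. Under the standing assumptions several structural facts are already in place. Since $R$ is noetherian connected graded, Cohen--Macaulay, and $H$ is semisimple acting inner faithfully, $R$ is a finitely generated $R^H$-module, and both $R\#H$ and $\End_{R^H}(R)$ are noetherian of GK-dimension equal to $\GKdim R$. The Cohen--Macaulay hypothesis together with semisimplicity of $H$ lifts to the Auslander Cohen--Macaulay property for $R\#H$; in particular, every nonzero two-sided ideal of $R\#H$ has GK-dimension equal to $\GKdim R$. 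Finally, $\varphi$ is a morphism of $\Z$-graded $R\#H$-bimodules.

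The core of the proof is the inequality
$$\GKdim K,\ \GKdim C\ \leq\ \GKdim R\#H\big/(1\#t).$$
For the kernel this is relatively direct: $\varphi(1\#t)$ is, up to a scalar, the Reynolds-style projection $R\to R^H\hookrightarrow R$, and one verifies that $K\cdot (1\#t)=0$, so $K$ is naturally a module over $R\#H/(1\#t)$. The cokernel is more subtle: one compares $R\#H$ and $\End_{R^H}(R)$ through the idempotent $e=\vep(t)^{-1}(1\#t)$ satisfying $e(R\#H)e\cong R^H$, and shows that the discrepancy between $\varphi(R\#H)$ and $\End_{R^H}(R)$ is controlled by the two-sided ideal generated by $1\#t$, producing a bimodule surjection from a shift of $R\#H/(1\#t)$ onto $C$.

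Granted this bound, the ``if'' direction is immediate. Assuming $\mathrm{p}(R,H)\geq 2$, both $\GKdim K$ and $\GKdim C$ are at most $\GKdim R-2<\GKdim R$. The Auslander Cohen--Macaulay property of $R\#H$ forces the nonzero two-sided ideal $K$ to have full GK-dimension, hence $K=0$. For $C$, with $K=0$ we have $\varphi(R\#H)\cong R\#H$, so both $\varphi(R\#H)$ and $\End_{R^H}(R)$ are reflexive Cohen--Macaulay $R^H$-modules of full GK-dimension; a standard $\Ext$-duality argument over the Cohen--Macaulay ring $R^H$ then shows that any $R^H$-torsion quotient between two such modules of codimension at least $2$ vanishes, so $C=0$. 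Conversely, if $\varphi$ is an isomorphism, one identifies $R\#H/(1\#t R\#H)$ with $\End_{R^H}(R)/\End_{R^H}(R)\,e\,\End_{R^H}(R)$, whose support lies in the ``non-projective locus'' of $R$ over $R^H$; reflexivity of $R$ over the Cohen--Macaulay ring $R^H$ (the $S_2$-condition) puts this locus in codimension at least $2$, yielding $\mathrm{p}(R,H)\geq 2$.

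The principal obstacle I anticipate is the cokernel estimate $\GKdim C\leq \GKdim R\#H/(1\#t)$. Unlike $K$, which is a two-sided ideal of $R\#H$ where Auslander Cohen--Macaulay applies directly, $C$ lives inside $\End_{R^H}(R)$ and has no a priori description as a quotient of $R\#H$-bimodules independent of $\varphi$. The cleanest route I see is an $\Ext$-spectral-sequence or change-of-rings argument that combines the Cohen--Macaulay property of $R$ as an $R^H$-module with the specific role of the integral $t$ in producing the Reynolds projection; once this technical bound is established, the remaining steps are formal consequences of Cohen--Macaulay homological algebra and reflexivity over $R^H$.
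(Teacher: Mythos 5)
This statement is quoted from \cite[Theorem 0.3]{BHZ2019}; the paper does not reprove it, but all the machinery for the standard proof is present in Section 1. That proof is short: by Buchweitz's criterion (Proposition \ref{grade-Auslander-map}), $\varphi$ is bijective if and only if $j_B(B/BeB)\geq 2$ where $B=R\#H$ and $e=1\#t$; since $B$ is a Frobenius extension of $R$, grades and GK-dimensions of finitely generated $B$-modules agree with those over $R$ (Lemma \ref{Frob-Aus-Goren}), and the Cohen--Macaulay hypothesis converts grade into GK-codimension, so $j_B(B/BeB)=\GKdim R-\GKdim B/BeB=\mathrm{p}(R,H)$. Your proposal replaces this with a direct analysis of $\Ker\varphi$ and $\Coker\varphi$, which is a genuinely different and substantially harder route.

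The kernel half of your argument is essentially sound: $\Ker\varphi$ is the left annihilator of $Be$, hence a two-sided ideal killed by $BeB$, so it is a $B/BeB$-module and its GK-dimension is bounded by $\GKdim B-\mathrm{p}(R,H)$; one still needs a reason a nonzero ideal cannot have small GK-dimension, which is exactly the grade/CM argument, not primeness of $B$ (which fails in general). The real gap is the cokernel bound $\GKdim\Coker\varphi\leq\GKdim B/BeB$, which you acknowledge you cannot establish. There is in fact no natural bimodule surjection from (a shift of) $B/BeB$ onto $\Coker\varphi$ under the stated hypotheses: $\Coker\varphi$ is controlled by $\Ext^1_B(B/BeB,B)$, obtained by applying $\Hom_B(-,B)$ to $0\to BeB\to B\to B/BeB\to 0$ and identifying $\Hom_B(BeB,B)$ with $\End_{eBe}(Be)$ --- this is precisely the content of Buchweitz's proposition, and it is why the correct invariant is the grade of $B/BeB$ rather than a GK-dimension bound on the cokernel itself. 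Your converse direction also leans on a commutative intuition (``reflexive implies locally free in codimension one'') that does not transfer to this setting; the clean statement is again that bijectivity of $\varphi$ forces $j_B(B/BeB)\geq 2$, which under CM is the pertinency bound. I would recommend abandoning the kernel/cokernel decomposition and running the argument through Proposition \ref{grade-Auslander-map} together with Lemma \ref{Frob-Aus-Goren} and the CM identity $j(M)+\GKdim M=\GKdim R$.
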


Some other interesting partial results concerning Auslander's Theorem have been proved in \cite{BHZ2018, BHZ2019, GKMW2019, CKZ2020} by using the pertinency as a major tool.

The goal of this paper is to verify the conjecture for finite group actions and coactions on polynomial identity (PI) Artin-Schelter regular algebras. The ideal of the proof is to use a result given by Buchweitz \cite[Proposition 2.9]{Buc2003}.

\begin{thm}(Theorem \ref{Auslander-thm-for-PI-rings}) \label{Auslander-thm-for-PI-rings'}
	Let $R$ be a noetherian PI Artin-Schelter regular algebra, and $H$ be a semisimple Hopf algebra acting on $R$ homogeneously and inner faithfully. 
	Suppose that $H = \kk G$ or $(\kk G)^*$ where $G$ is a finite group.
	If the homological determinant of the $H$-action on $R$ is trivial, then the Auslander map $\varphi: R\#H \to \End_{R^H}(R)$ is bijective.
\end{thm}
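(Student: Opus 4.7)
The plan is to reduce the bijectivity of the Auslander map $\varphi$ to Buchweitz's criterion \cite[Proposition 2.9]{Buc2003}, which (as suggested in the introduction) characterizes when a natural map of the form $R\#H \to \End_{R^H}(R)$ is an isomorphism under suitable Cohen--Macaulay hypotheses. I would treat both cases in parallel: for $H = \kk G$ one has $R^H = R^G$, while for $H = (\kk G)^*$ the $H$-action on $R$ is equivalent to a $G$-grading and $R^H$ is the identity component $R_e$.

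The first step is to produce the Cohen--Macaulay input required by Buchweitz's criterion. Since $R$ is noetherian PI, $R$ is a finitely generated module over its center $Z(R)$, so $R$ is in particular module-finite over $R^H$ (using that $Z(R)^H \subseteq Z(R)$ is a finite extension under the finite group $G$). The trivial homological determinant hypothesis, together with the AS regularity of $R$, allows one to invoke known theorems on fixed/coinvariant rings (J\o rgensen--Zhang, Kirkman--Kuzmanovich--Zhang) to conclude that $R^H$ is AS Gorenstein, hence Cohen--Macaulay, and that $R$ is a maximal Cohen--Macaulay module over $R^H$.

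The second step is to verify the remaining ``generic'' hypothesis of Buchweitz's criterion. Since $R$ is prime PI, its classical ring of fractions $Q(R)$ is a central simple algebra over $Q(Z(R))$; the $H$-action extends, and inner faithfulness on $R$ yields faithfulness on $Q(R)$. Classical Galois/smash-product theory then gives $Q(R)\#H \cong \End_{Q(R)^H}(Q(R))$, which supplies the generic bijectivity of $\varphi$ needed to feed into Buchweitz. The injectivity of $\varphi$ on the nose follows from inner faithfulness, so the real content is the surjectivity, which is exactly what Buchweitz promotes from generic to global.

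The main obstacle is precisely this translation: adapting Buchweitz's criterion (which is naturally commutative-algebraic) to the PI setting requires checking reflexivity of $R$ over $R^H$ and performing a careful height-one analysis on $\Spec Z(R^H)$, in which the trivial homological determinant assumption must play the role of the classical ``no pseudo-reflections in codimension one'' condition on subgroups of $\SL_n(\kk)$. Making this codimension-one equivalence precise in the noncommutative PI framework, and in particular verifying it uniformly for the two cases $H = \kk G$ and $H = (\kk G)^*$, is the crux of the argument.
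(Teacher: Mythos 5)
Your proposal has the right skeleton---reduce to Buchweitz's criterion (Proposition \ref{grade-Auslander-map}), use the PI structure theory to get module-finiteness over the center, and use Galois theory of the quotient division ring---but the decisive step is missing, and you say so yourself: the ``height-one analysis'' that you defer to as ``the crux of the argument'' is precisely the content of the theorem. Buchweitz's criterion reduces bijectivity of $\varphi$ to the grade condition $j_B(B/BeB) \geq 2$ for $B = R\#H$, $e = 1\#t$. Your ``generic bijectivity'' $Q(R)\#H \cong \End_{Q(R)^H}(Q(R))$ is a statement at the generic point of $\Spec Z$ (codimension zero); it only yields $j_B(B/BeB)\geq 1$, i.e.\ injectivity of $\varphi$, and there is no mechanism in the proposal that promotes it to the codimension-two statement. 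The paper closes this gap (Lemma \ref{GKd-Coker-gamma}) by localizing at every height-one prime $\p$ of $Z = Z(R)\cap R^H$: since $R$ is a maximal order, finite over its normal center (Theorem \ref{Staf-Zhang-thm}), $Z(R)_{\p}$ is Dedekind, $R_{\p}$ is a Dedekind prime ring, $\gld(R_{\p}\#H)\leq 1$, so $R_{\p}$ is projective over $R_{\p}^H$ and the evaluation map $\gamma_{\p}$ is bijective. Note that this codimension-one step uses no hypothesis on the homological determinant, contrary to your guess that trivial $\hdet$ plays the role of ``no pseudo-reflections in codimension one''; the trivial $\hdet$ is used elsewhere, namely to prove $\Hom_{R^H}(R,R^H)\cong R$ via the balanced dualizing complex (Lemma \ref{nu-bijective}), which is the bijectivity of $\nu$.

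A second, structural gap: Buchweitz's criterion concerns $B/BeB = \Coker\mu$, whereas the codimension estimate one can actually prove by localization concerns $\Coker\gamma$, the cokernel of the evaluation map $Be\otimes_A\Hom_A(Be,A)\to\End_A(Be)$. Bridging the two requires Theorem \ref{main-thm-0}: when $B$ is Auslander Gorenstein (which holds here because $R\#H$ is a Frobenius extension of the Auslander-regular, Cohen--Macaulay algebra $R$, Lemma \ref{Frob-Aus-Goren}), injectivity of $\varphi$, bijectivity of $\nu$, and $j_B(\Coker\gamma)\geq 2$ together force $j_B(B/BeB)\geq 2$ via the exact sequence $0\to B/BeB\to\Coker\gamma\to\Coker\varphi\to 0$. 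The Cohen--Macaulay property is also what converts the Krull-dimension bound $\Kdim_R\Coker\gamma\leq\Kdim R-2$ into the grade bound. None of this bookkeeping appears in your proposal, so the reduction to Buchweitz does not close as written.
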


In the paper \cite{QWZ2019}, Qin, Wang and Zhang shows that whenever Auslander Theorem holds one can view $R\#H$ as a noncommutative quasi-resolution (NQR for short, which is a generalization of noncommutative crepant resolution (NCCR) in the sense of Van den Bergh \cite{VdBer2004}) of $R^H$, and when $R^H$ is a central subalgebra of $R$, $R\#H$ is a NCCR of $R^H$.

Applying Theorem \ref{Auslander-thm-for-PI-rings'}, we have the following result concerning whenever the center of a noetherian PI Artin-Schelter regular algebra has a NCCR.

\begin{thm}[Theorem \ref{NCCR-thm}]
	Let $R$ be a noetherian PI connected graded Calabi-Yau algebra with the center $Z$. If $R^{\GrAut_{Z}(R)} = Z$, then $\End_Z(R)$ is a NCCR of $Z$.
\end{thm}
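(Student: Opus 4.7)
The plan is to apply the main theorem of the paper, Theorem \ref{Auslander-thm-for-PI-rings'}, with $H := \kk G$ where $G := \GrAut_Z(R)$, and then to invoke the Qin--Wang--Zhang noncommutative quasi-resolution criterion recalled in the introduction.

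First I would verify the hypotheses of Theorem \ref{Auslander-thm-for-PI-rings'}. Since $R$ is a noetherian PI algebra, Posner's theorem ensures $R$ is a finite module over $Z$, and a graded $Z$-linear endomorphism of $R$ is determined by its action on a finite set of graded $Z$-module generators; this forces $G$ to be a finite group, so $H = \kk G$ is a semisimple Hopf algebra of the form allowed in Theorem \ref{Auslander-thm-for-PI-rings'}. The $G$-action on $R$ is homogeneous by construction, and inner faithfulness of the $\kk G$-action follows immediately from the faithfulness of $G \subseteq \GrAut(R)$.

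The main technical step is to show that the homological determinant of the $G$-action is trivial. Here I would use the Calabi-Yau hypothesis, which makes the Nakayama bimodule of $R$ untwisted and, for a PI Calabi-Yau algebra, identifies the canonical generator of $\Ext^n_R(\kk,R)$ with central data. More concretely, after extending scalars to the generic simple quotient $Q := R \otimes_Z \Frac(Z)$, which is a central simple $\Frac(Z)$-algebra by Posner's theorem, the induced automorphism of $Q$ is inner by Skolem--Noether for every $\sigma \in G$, and therefore acts trivially on the top Ext of $Q$. A flatness and descent argument, combined with the Calabi-Yau identification of the volume form, should then force $\hdet(\sigma)=1$ for the action on $R$ itself. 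This is the step I expect to require the most care: one must ensure that the generic triviality of $\hdet$ transfers correctly to the graded invariant over $R$, and that the volume form used is indeed fixed by any $Z$-linear graded automorphism.

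With trivial homological determinant established, Theorem \ref{Auslander-thm-for-PI-rings'} gives that the Auslander map
$$\varphi: R\#G \To \End_{R^G}(R) = \End_Z(R)$$
is bijective, hence an isomorphism of graded algebras. Since $R^G = Z$ is by hypothesis a central subalgebra of $R$, the Qin--Wang--Zhang criterion (recalled in the introduction just after Theorem \ref{Auslander-thm-for-PI-rings'}) upgrades the Auslander isomorphism to the statement that $\End_Z(R) \cong R\#G$ is an NCCR of $Z$, completing the proof.
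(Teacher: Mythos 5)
Your overall strategy---take $G=\GrAut_Z(R)$, show it is a finite subgroup of $\SL(R)$, apply the main Auslander theorem with $H=\kk G$, and then upgrade the resulting isomorphism to an NCCR---is exactly the paper's route, but two of your three steps have genuine gaps. The most serious is the finiteness of $G$. The fact that a graded $Z$-linear automorphism is determined by where it sends finitely many $Z$-module generators of $R$ does \emph{not} bound the number of such automorphisms: each generator still has infinitely many admissible images (compare $\Aut_K(M_n(K))=\mathrm{PGL}_n(K)$, which is determined by its action on the matrix units yet is infinite). The paper has to work for this point: using Skolem--Noether it writes each $\sigma\in\GrAut_Z(R)$ as conjugation by a normal regular element $z\in R$, deduces from the integrality of elements of $R$ over $Z$ (in degree $k\le n=\dim_K Q$) that any eigenvalue $\lambda$ of $\sigma$ satisfies $\lambda^l=1$ for some $l\le n$, hence $\sigma^{n!}=\id_R$, and only then concludes finiteness from Burnside's theorem on linear groups of bounded exponent. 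Without an argument of this kind, $\kk G$ need not be finite dimensional and Theorem \ref{Auslander-thm-for-PI-rings'} does not apply.

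Second, the triviality of the homological determinant is asserted rather than proved: ``a flatness and descent argument \dots should then force $\hdet(\sigma)=1$'' is precisely the nontrivial content of this step. The paper disposes of it by first clearing denominators so that the Skolem--Noether conjugating element is a normal regular element $z$ of $R$ itself, and then quoting Lemma \ref{normal-hdet} (from \cite{WZ2022}): for a connected graded Calabi--Yau algebra, an automorphism implemented by a normal regular element has trivial homological determinant. Your idea (inner on $Q$, hence trivial hdet by the Calabi--Yau property) is the right one, but the descent from $Q$ to $R$ is exactly what needs to be made precise. Finally, the passage from ``the Auslander map is an isomorphism and $R^G=Z$ is central'' to ``NCCR'' cannot simply be cited from the introduction's informal sentence: Theorem \ref{NQR-thm} only yields an NQR, and the paper's proof of Theorem \ref{NCCR-thm}(1) explicitly verifies the NCCR axioms of Definition \ref{NCCR-defn} --- that $Z=R^G$ is a Gorenstein normal domain, that $R\cong\Hom_{Z}(R,Z)$ is a reflexive and maximal CM $Z$-module, and that $\End_Z(R)\cong R\#G$ has finite global dimension by \cite[Theorem 1.1]{Liu2005}. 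These verifications should appear in your proof.
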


\section{Noncommutative Auslander Theorem}

\subsection{The Auslander map for $(B, e)$}

Firstly, we recall some definitions.

\begin{defn}\label{defn-Auslander-Gorenstein ring}
	Let $R$ be a (left and right) noetherian algebra.
	
	(1) The {\it grade} of a left or right $R$-module $M$ is
	$$j_R(M) = j(M) := \inf \{ j \mid \Ext^j_R(M, R) \neq 0 \}.$$
	
	(2) $R$ is called to satisfy the {\it Auslander condition} if, for every finitely generated left or right $R$-module $M$ and for all $i \geq 0$, $j_R(N) \geq i$ for all right or left submodules $N \subseteq \Ext^i_R(M,R)$.
	
	(3) $R$ is called {\it Auslander Gorenstein} if it satisfies the Auslander condition, and has finite left and right injective dimensions. If further, $R$ has finite global dimension, then $R$ is called {\it Auslander regular}.
	
	(4) $R$ is called {\it Cohen-Macaulay} (or {\it CM} for short) with respect to Gelfand-Kirillov (or GK for short) dimension and Krull dimension if for any finitely generated $R$-module $M$
	$$j(M) + \GKdim M = \GKdim R < + \infty \; \text{ and } \; j(M) + \Kdim M = \Kdim R < + \infty,$$
	respectively. See \cite{MR2001} for the definitions of GK-dimension and Krull dimension.
\end{defn}

\begin{lem}\label{Auslander-Gorenstein-lem}
	Let $R$ be a noetherian Auslander Gorenstein ring. If $0 \to M' \to M \to M'' \to 0$ is an exact sequence of finitely generated $R$-modules, then $j(M) = \min \{ j(M'), j(M'') \}$.
\end{lem}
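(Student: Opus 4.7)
The plan is to derive both inequalities $j(M) \geq \min\{j(M'), j(M'')\}$ and $j(M) \leq \min\{j(M'), j(M'')\}$ by reading off information from the long exact sequence of Ext
$$\cdots \to \Ext^{i-1}_R(M', R) \To \Ext^i_R(M'', R) \To \Ext^i_R(M, R) \To \Ext^i_R(M', R) \To \Ext^{i+1}_R(M'', R) \To \cdots$$
obtained by applying $\Hom_R(-, R)$ to the given short exact sequence. Write $a = j(M')$, $b = j(M'')$, and $m = \min\{a, b\}$.

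The inequality $j(M) \geq m$ is immediate: for every $i < m$ both $\Ext^i_R(M', R)$ and $\Ext^i_R(M'', R)$ vanish by definition of grade, hence $\Ext^i_R(M, R) = 0$ by exactness.

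For the reverse $j(M) \leq m$, I would split into two cases. If $b \leq a$, then at index $i = m = b$ the term $\Ext^{m-1}_R(M', R)$ vanishes (since $m - 1 < a$), so the connecting map realises $\Ext^m_R(M'', R)$ as a submodule of $\Ext^m_R(M, R)$; since $\Ext^b_R(M'', R) \neq 0$ by definition of $b$, this forces $\Ext^m_R(M, R) \neq 0$, as required.

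The harder case is $a < b$, where $m = a$ and now $\Ext^m_R(M'', R) = 0$, so the exact sequence identifies $\Ext^a_R(M, R)$ with the kernel of the map $\Ext^a_R(M', R) \to \Ext^{a+1}_R(M'', R)$. If this kernel were zero, the Auslander condition applied to the submodule $\Ext^a_R(M', R) \hookrightarrow \Ext^{a+1}_R(M'', R)$ would yield $j(\Ext^a_R(M', R)) \geq a+1$. The main obstacle, and the point where the full strength of Auslander Gorenstein (not just the Auslander condition) is used, is to contradict this by invoking the standard fact that in an Auslander Gorenstein ring the top Ext preserves grade, i.e.\ $j(\Ext^{j(N)}_R(N, R)) = j(N)$ for every finitely generated $N$; applied to $N = M'$ this gives $j(\Ext^a_R(M', R)) = a$, a contradiction. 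This property can be cited from Bj\"ork's work on the convergent spectral sequence $E_2^{p,q} = \Ext^p_R(\Ext^q_R(-, R), R)$ whose convergence relies crucially on the finite injective dimension hypothesis, so the Gorenstein part of the assumption is genuinely needed here.
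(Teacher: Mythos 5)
Your argument is correct. Note, however, that the paper offers no proof of this lemma at all: it is stated as a known fact about Auslander--Gorenstein rings (it is Proposition 4.5(3) in Levasseur's \emph{Some properties of non-commutative regular graded rings}, and appears in Bj\"ork's work on the Auslander condition), so there is no in-paper proof to compare against. What you have written is essentially the standard published argument: the inequality $j(M)\geq\min\{j(M'),j(M'')\}$ and the case $j(M'')\leq j(M')$ follow from the long exact sequence alone, while the case $j(M')<j(M'')$ genuinely needs both halves of the Auslander--Gorenstein hypothesis --- the Auslander condition to bound the grade of a submodule of $\Ext^{a+1}_R(M'',R)$ from below, and the finiteness of the injective dimension (via the convergent bidualizing spectral sequence) to establish $j\bigl(\Ext^{j(N)}_R(N,R)\bigr)=j(N)$, which is exactly Levasseur's Proposition 4.5(2). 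You correctly isolate that last fact as the one nontrivial input and attribute it appropriately; the only cosmetic omission is the degenerate bookkeeping when $M'$ or $M''$ vanishes (grade $+\infty$), which is trivial since the short exact sequence then collapses.
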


An algebra $R$ is called {\it connected graded} if $R = \oplus_{i \in \N} R_i$, $1 \in R_0 = \kk$, and $R_iR_j \subseteq R_{i+j}$ for all $i, j \in \N$.
A {\it polynomial identity algebra} (or {\it PI algebra} for short), is an algebra satisfying a polynomial identity. We refer to \cite[Chapter 13]{MR2001} for some basic materials about PI algebras.

\begin{lem}\cite[Lemma 6.1]{SZ1994}
	Let $R$ be a connected graded PI ring $R$ and $M$ be a finitely generated graded $R$-module. Then $\mathrm{GKdim} M = \mathrm{Kdim} M \in \N$.
\end{lem}

Throughout this section, let $B$ be a ring and $e \in B$ be a nonzero idempotent. The {\it Auslander map} is the natural map
$$\varphi: B \to \End_{eBe}(Be), \text{ which is defined by } \varphi(b)(x) = bx \text{ for all } b \in B, x \in Be.$$

In order to show our main theorem, we need the following results proved by Buchweitz.

\begin{prop}\cite[Proposition 2.9]{Buc2003}\label{grade-Auslander-map}
	\begin{enumerate}
		\item $\varphi$ is injective if and only if $j(B/BeB) \geq 1$,
		\item $\varphi$ is bijective if and only if $j(B/BeB) \geq 2$.
	\end{enumerate}
\end{prop}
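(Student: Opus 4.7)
The plan is to derive both statements from a four-term exact sequence obtained by applying $\Hom_B(-, B)$ (taking right $B$-modules) to the short exact sequence
$$0 \to BeB \to B \to B/BeB \to 0.$$
Since $B$ is projective over itself, $\Ext^1_B(B, B) = 0$ and the long exact sequence in Ext truncates to
$$0 \to \Hom_B(B/BeB, B) \to B \xrightarrow{\psi} \Hom_B(BeB, B) \to \Ext^1_B(B/BeB, B) \to 0,$$
using the canonical $\Hom_B(B, B) \cong B$.

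The next step is to match the first three terms with the data of the Auslander map via two identifications. First, $\Hom_B(B/BeB, B)$ is naturally the left annihilator of $BeB$ in $B$; since $Be\cdot B = BeB$, this equals the left annihilator of $Be$, which is exactly $\Ker(\varphi)$. Second, and more delicately, $\Hom_B(BeB, B) \cong \End_{eBe}(Be)$ via the restriction map $f \mapsto f|_{Be}$: one checks $f(xe) = f(xe \cdot e) = f(xe)\cdot e \in Be$ so the restriction lands in $Be$, and right $eBe$-linearity is automatic from right $B$-linearity. The inverse sends $g \in \End_{eBe}(Be)$ to the extension $\tilde g(xey) = g(xe)\cdot y$. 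Under these identifications, the connecting map $\psi$ is precisely the Auslander map $\varphi$.

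With these identifications installed, the exact sequence becomes
$$0 \to \Ker(\varphi) \to B \xrightarrow{\varphi} \End_{eBe}(Be) \to \Ext^1_B(B/BeB, B) \to 0,$$
from which both claims follow immediately: $\varphi$ is injective iff $\Hom_B(B/BeB, B) = 0$, equivalent to $j(B/BeB) \geq 1$; and $\varphi$ is bijective iff additionally $\Ext^1_B(B/BeB, B) = 0$, equivalent to $j(B/BeB) \geq 2$.

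The main obstacle is the well-definedness of the extension $\tilde g$: given a relation $\sum_i x_i e y_i = 0$ in $B$, one needs $\sum_i g(x_i e) y_i = 0$. Using the right $eBe$-linearity of $g$ together with $g(x_i e)\cdot e = g(x_i e)$, one can directly compute $\bigl(\sum_i g(x_i e) y_i\bigr) e = g\bigl((\sum_i x_i e y_i) e\bigr) = 0$; the remaining step is to upgrade this to outright vanishing, i.e., to show that right multiplication by $e$ is injective on $BeB$ (equivalently, that the multiplication map $Be \otimes_{eBe} eB \to BeB$ is suitably compatible with the $g$-twisted version). This reduction is the technical heart of Buchweitz's argument and is what forces the passage through the grade of $B/BeB$.
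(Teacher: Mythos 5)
The paper gives no proof of this proposition (it is quoted from \cite{Buc2003}), so your argument has to stand on its own, and as written it does not: the identification $\Hom_B(BeB,B)\cong\End_{eBe}(Be)$, which you install unconditionally before reading off both parts of the statement, is false in general. The restriction map $f\mapsto f|_{Be}$ is always injective, but its surjectivity is exactly the well-definedness of $\tilde g$ that you defer to the last paragraph, and the route you propose there --- showing that right multiplication by $e$ is injective on $BeB$ --- is a dead end: for $B=M_2(\kk)$ and $e=E_{11}$ the proposition holds trivially ($BeB=B$, $\varphi$ is the Morita isomorphism), yet $E_{12}\cdot e=0$, so that injectivity simply fails and cannot be what the argument rests on. The identification itself genuinely fails when $j(B/BeB)=0$: for the quiver algebra on two vertices with arrows $\alpha\colon 1\to 2$, $\beta\colon 2\to 1$ and relations $\alpha\beta=\beta\alpha=0$, with $e=e_2$, one computes $\dim_\kk\Hom_B(BeB,B)=3$ while $\End_{eBe}(Be)\cong M_2(\kk)$. (Part (1) survives regardless, since $\Ker\varphi$ is the left annihilator of $BeB$, i.e.\ $\Hom_B(B/BeB,B)$, with no need for the second identification; it is part (2) that is affected.)

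The repair is a sharper version of your own computation. Given $g\in\End_{eBe}(Be)$ and a relation $\sum_i x_iey_i=0$, test $w:=\sum_i g(x_ie)y_i$ against $be$ for \emph{every} $b\in B$, not only against $e$: since $g(x_ie)=g(x_ie)e$ and $ey_ibe\in eBe$, right $eBe$-linearity gives
\begin{equation*}
wbe=\sum_i g(x_ie)\,(ey_ibe)=\sum_i g(x_iey_ibe)=g\Bigl(\bigl(\textstyle\sum_i x_iey_i\bigr)be\Bigr)=0,
\end{equation*}
so $w$ lies in the left annihilator of $BeB$, which is $\Hom_B(B/BeB,B)$. Hence the restriction map $\Hom_B(BeB,B)\to\End_{eBe}(Be)$ is bijective precisely under the hypothesis $j(B/BeB)\geq 1$, and the proof of (2) must be threaded through this: if $j(B/BeB)\geq 2$ then in particular $j\geq 1$, the identification holds, and the four-term sequence gives bijectivity of $\varphi$; conversely, if $\varphi$ is bijective it is injective, so $j\geq 1$ by part (1), the identification holds, and then $\Ext^1_B(B/BeB,B)\cong\Coker\varphi=0$. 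With this modification your argument is complete and is, as far as I can tell, the standard proof of Buchweitz's result.
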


Let $A$ be a $\kk$-algebra and $M$ be a right $A$-module. Then there is a morphism
$$\gamma_M: M \otimes_A \Hom_{A}(M, A) \To \End_A(M), \qquad x \otimes f \mapsto \big(y \mapsto xf(y) \big).$$
Set $A = eBe$. Then there is an $A$-$B$-bimodule morphism
$$\nu: eB \to \Hom_A(Be, A), \qquad y \mapsto (x \mapsto yx).$$
Now we have a useful commutative diagram of $B$-$B$-bimodule morphisms
\begin{equation}\label{com-dig}
\begin{split}
	\xymatrix{Be \otimes_{A} eB \ar[rr]^-{\mu} \ar[d]_{\id \otimes \nu} && B \ar[d]^{\varphi} \\
		Be \otimes_{A} \Hom_A(Be, A) \ar[rr]^-{\gamma = \gamma_{\small Be}} && \End_{A}(Be). }
\end{split}
\end{equation}
where $\mu(x \otimes y) = xy$ for any $x \in Be$ and $y \in eB$.

\begin{lem}\label{nu-injective}
	If the Auslander map $\varphi$ is injective, then the morphism $\nu$ is also injective.
\end{lem}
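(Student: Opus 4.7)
The plan is to argue directly from the definitions: show that any $y$ in the kernel of $\nu$ is forced to satisfy $\varphi(y)=0$, and then invoke the hypothesis that $\varphi$ is injective.

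First I would record the set-theoretic inclusion $A=eBe\subseteq Be$, since $ebe=(eb)e\in Be$ for every $b\in B$. Next, suppose $y\in eB$ satisfies $\nu(y)=0$. Unpacking the definition, this means $yx=0$ as an element of $A$ for every $x\in Be$; by the inclusion just recorded, the same equation $yx=0$ persists in $B$ and hence in $Be$.

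Now I would observe that the Auslander map $\varphi:B\to\End_{A}(Be)$ sends the element $y\in eB\subseteq B$ to the $A$-linear endomorphism $x\mapsto yx$ of $Be$. The previous step identifies this endomorphism as the zero map, so $\varphi(y)=0$, and the injectivity hypothesis on $\varphi$ forces $y=0$. Hence $\ker\nu=0$.

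I do not foresee any serious obstacle: the argument is essentially a one-line diagram chase through the commutative diagram~(\ref{com-dig}), specialized to elementary tensors of the form $e\otimes y$. The only mildly subtle bookkeeping point is that $\nu(y)$ takes values in $A$ while $\varphi(y)$ takes values in $Be$; this is resolved immediately by the inclusion $A\hookrightarrow Be$ noted at the outset.
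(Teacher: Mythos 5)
Your proof is correct and is essentially the paper's own argument: the paper phrases it as commutativity of the square relating $eB\hookrightarrow B$, $\nu$, $\varphi$, and the map $\Hom_A(Be,A)\to\End_A(Be)$ induced by $A\subseteq Be$, while you carry out the same diagram chase element-wise. No changes needed.
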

\begin{proof}
	The conclusion follows essentially from the commutativity of the following diagram
	\begin{equation*}
		\xymatrix{
			eB \ar@{>->}[r] \ar[d]^{\nu} & B \ar@{>->}[d]^{\varphi} \\
			\Hom_A(Be, A) \ar@{>->}[r] & \End_A(Be).
		}
	\end{equation*}
\end{proof}

\begin{prop}\label{main-prop-0}
	If the Auslander map $\varphi: B \to \End_{A}(Be)$ is bijective, then
	\begin{enumerate}
		\item[(a)] $\varphi$ is injective,
		\item[(b)] $\nu$ is bijective,
		\item[(c)] $j_B(\Coker \gamma) \geq 2$.
	\end{enumerate}
\end{prop}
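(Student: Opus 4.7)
The plan is to dispatch the three claims in order, using only the commutative square \eqref{com-dig}, Lemma \ref{nu-injective}, and Buchweitz's Proposition \ref{grade-Auslander-map}. Claim (a) is immediate, since a bijection is injective.

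For (b), the key observation is that $\nu$ is nothing but the restriction of $\varphi$ to the right $B$-submodule $eB \subseteq B$. Indeed, for $y \in eB$ and $x \in Be$ one has $\varphi(y)(x) = yx \in eBe = A$, so $\varphi$ carries $eB$ into $\Hom_A(Be, A) \subseteq \End_A(Be)$, and its restriction there coincides with $\nu$. Injectivity of $\nu$ then follows from injectivity of $\varphi$ (essentially Lemma \ref{nu-injective}). For surjectivity, given $f \in \Hom_A(Be, A)$, bijectivity of $\varphi$ yields a unique $b \in B$ with $f = \varphi(b)$; decomposing $b = eb + (1-e)b$ via the right $B$-module splitting $B = eB \oplus (1-e)B$, the image of $\varphi((1-e)b) = f - \varphi(eb)$ lies in $(1-e)Be \cap eBe$, which vanishes (left multiplication by $e$ kills the first summand but fixes the second). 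Injectivity of $\varphi$ then forces $(1-e)b = 0$, so $b \in eB$ and $\nu(b) = f$.

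For (c), apply the functor $Be \otimes_A -$ to the bijection $\nu$ of (b); since tensor preserves isomorphisms, $\id_{Be} \otimes \nu$ is bijective, and $\varphi$ is bijective by hypothesis. The commutative square \eqref{com-dig} then forces $\Coker \gamma \cong \Coker \mu$. Since the image of $\mu$ equals $Be \cdot eB = BeB$, we have $\Coker \mu \cong B/BeB$. Proposition \ref{grade-Auslander-map}(2) finally delivers $j(B/BeB) \geq 2$ precisely because $\varphi$ is bijective, whence $j_B(\Coker \gamma) \geq 2$.

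I do not expect a real obstacle; the proposition is essentially a bookkeeping exercise organized around \eqref{com-dig} and Buchweitz's result. The only non-formal ingredient is the decomposition argument in (b), showing that any preimage under $\varphi$ of an element of $\Hom_A(Be, A)$ must already sit in $eB$.
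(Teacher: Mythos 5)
Your proof is correct and follows essentially the same route as the paper: (a) is trivial, (b) produces the preimage $eb$ of $f$ under $\nu$ (the paper verifies directly that $\nu(eb)=e\varphi(b)=ef=f$, while you reach the same point via the splitting $b=eb+(1-e)b$ and the vanishing of $(1-e)Be\cap eBe$), and (c) identifies $\Coker\gamma\cong\Coker\mu=B/BeB$ from the commutative square and invokes Buchweitz's Proposition \ref{grade-Auslander-map}. The extra step in (b) showing $(1-e)b=0$ is harmless but unnecessary; otherwise the arguments coincide.
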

\begin{proof}
	(a) It is obvious. 
	
	(b) 
	By Lemma \ref{nu-injective}, we only need to show that $\nu$ is surjective.
	For any $f \in \Hom_A(Be, A)$, there exists an element $b \in B$ such that $\varphi(b) = f$ because $\varphi$ is bijective. Since $\nu(eb) = \varphi(eb) = e\varphi(b) = ef = f$, it follows that $\nu$ is surjective as required.
	
	(c) 
	According to the commutativity of the diagram \eqref{com-dig}, $\Coker \gamma \cong \Coker \mu = B/BeB$. Hence by Proposition \ref{grade-Auslander-map}, $j_B(\Coker \gamma) \geq 2$.
\end{proof}

By using the Auslander property, we show that the necessary condition for the bijectivity of the Auslander map in Proposition \ref{main-prop-0}, is also sufficient.

\begin{thm}\label{main-thm-0}
	Suppose that $B$ is a noetherian Auslander Gorenstein ring. Then $\varphi$ is bijective if and only if the condition (a), (b) and (c) in Proposition \ref{main-prop-0} hold.
\end{thm}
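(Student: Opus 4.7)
The ``only if'' direction is exactly the content of Proposition \ref{main-prop-0}, so the substance lies in the converse: assuming (a), (b) and (c), I need to conclude that $\varphi$ is bijective. My plan is to reduce this to the grade estimate $j_B(B/BeB) \geq 2$, after which Proposition \ref{grade-Auslander-map}(2) immediately yields the bijectivity of $\varphi$. The bridge between $\Coker \gamma$ (whose grade is controlled by hypothesis (c)) and $B/BeB$ will be supplied by the commutative diagram \eqref{com-dig} together with the Auslander Gorenstein structure via Lemma \ref{Auslander-Gorenstein-lem}.

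The execution proceeds in three steps. First, I would use condition (b): since $\id \otimes \nu$ is an isomorphism, the commutativity $\varphi \circ \mu = \gamma \circ (\id \otimes \nu)$ in \eqref{com-dig} forces $\mathrm{Image}(\gamma) = \varphi(\mathrm{Image}(\mu)) = \varphi(BeB)$, so that $\Coker \gamma \cong \End_{A}(Be)/\varphi(BeB)$. Second, I would use condition (a): the injectivity of $\varphi$ identifies $\varphi(B)/\varphi(BeB)$ with $B/BeB$, and the inclusions $\varphi(BeB) \subseteq \varphi(B) \subseteq \End_A(Be)$ yield the short exact sequence of finitely generated $B$-modules
$$0 \To B/BeB \To \Coker \gamma \To \Coker \varphi \To 0.$$
Third, I would invoke Lemma \ref{Auslander-Gorenstein-lem} (applicable since $B$ is Auslander Gorenstein) to obtain $j_B(\Coker \gamma) = \min\{j_B(B/BeB), j_B(\Coker \varphi)\}$. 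Condition (c) asserts $j_B(\Coker \gamma) \geq 2$, so in particular $j_B(B/BeB) \geq 2$, and Proposition \ref{grade-Auslander-map}(2) finishes the argument.

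The only genuinely non-routine step is the first: matching the image of $\gamma$ with $\varphi(BeB)$ so that $\Coker \gamma$ can be placed above $B/BeB$ inside $\End_A(Be)/\varphi(BeB)$. This is where hypothesis (b) does the real work; once this identification is in hand, the remaining steps are bookkeeping combined with a single application of the additivity of grade for finitely generated modules over an Auslander Gorenstein ring. A minor care point is to check that all modules in the short exact sequence are finitely generated over $B$, which follows because $\Coker \gamma$ is a sub-quotient of $\End_A(Be)$ obtained from finitely generated pieces in the diagram.
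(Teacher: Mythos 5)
Your proposal is correct and follows essentially the same route as the paper: the same short exact sequence $0 \to B/BeB \to \Coker \gamma \to \Coker \varphi \to 0$ extracted from diagram \eqref{com-dig} using (a) and (b), followed by Lemma \ref{Auslander-Gorenstein-lem} and Proposition \ref{grade-Auslander-map}(2). You merely spell out in more detail how the identification $\mathrm{Image}(\gamma) = \varphi(BeB)$ produces that sequence, which the paper leaves implicit.
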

\begin{proof}
	It suffices to show that $\varphi$ is bijective when the condition (a), (b) and (c) in Proposition \ref{main-prop-0} hold.
	Since $\nu$ is bijective and $\varphi$ is injective, it follows from the commutative diagram \eqref{com-dig} that the sequence
	$$0 \To \Coker \mu = B/BeB \To \Coker \gamma \To \Coker \varphi \To 0$$
	is exact.
	According to Lemma \ref{Auslander-Gorenstein-lem}, $j_B(B/BeB) \geq j_B(\Coker \gamma) \geq 2$.
	Hence $\varphi$ is bijective by Proposition \ref{grade-Auslander-map}.
\end{proof}

\subsection{Smash products and the morphism $\widetilde{t}$}

In the following, let's consider the Hopf action.
Let $H$ stand for a Hopf algebra $(H, \Delta, \varepsilon)$ with the bijective antipode $S$. We use the Sweedler notation $\Delta(h) = \sum_{(h)} h_1 \otimes h_2$ for all $h \in H$.
We recommend \cite{Mon1993} as a basic reference for the theory of Hopf algebras and their actions on algebras.

Let $H$ be a finite dimensional semisimple Hopf algebra, and $t$ be a nonzero integral of $H$ with $\vep(t) = 1$.
Let $R$ be a left $H$-module algebra, i.e., $H$ acts on $R$. The $H$-action on $R$ is said {\it inner faithful} if $I \rhu R \neq 0$ for every nonzero Hopf ideal $I$ of $H$.

Now assume that $B$ is the smash product algebra $R\#H$, and that $e$ is the idempotent $1\#t$.
Let $R^H$ be the invariant subring of $R$ under the $H$-action. Then as $\kk$-algebras
$$A = eBe = (1\#t)(R\#H)(1\#t) = R^H \# t \cong R^H,$$
and $Be = R \# t \cong R$ as $B$-$A$-bimodules.
Thus we can rewrite the Auslander map as
$$\varphi: R\#H \To \End_{R^H}(R), \qquad r\#h \mapsto \big(x \mapsto r (h \rhu x)\big).$$

\begin{lem}
	There is a right $R$-module isomorphism $\lambda: R \to eB, \; r \mapsto \sum_{(t)} (t_1 \rhu r) \# t_2$.
\end{lem}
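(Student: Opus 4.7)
The plan is to construct an explicit two-sided inverse of $\lambda$. First I would observe that $\lambda(r)=(1\#t)(r\#1)=e\cdot(r\#1)$, so viewing $R$ as a subring of $B$ via $r\mapsto r\#1$, the right $R$-linearity of $\lambda$ is immediate from the associative law in $B$.

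For injectivity I would introduce the $\kk$-linear map
$$\pi\colon B\To R,\qquad r\#h\;\longmapsto\;S^{-1}(h)\rhu r,$$
and verify that $\pi\circ\lambda=\id_R$. Applying $S^{-1}$ to the antipode axiom $\sum_{(a)}S(a_1)a_2=\varepsilon(a)$ yields the dual identity $\sum_{(a)}S^{-1}(a_2)a_1=\varepsilon(a)$; specialising to $a=t$ and using $\varepsilon(t)=1$ gives
$$\pi(\lambda(r))=\sum_{(t)}\bigl(S^{-1}(t_2)\,t_1\bigr)\rhu r \;=\; \varepsilon(t)\cdot r \;=\; r,$$
which forces $\lambda$ to be injective.

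For surjectivity it suffices to show that every element of the form $e(r\#h)$ already lies in $\lambda(R)$. Coassociativity combined with the identity $\sum_{(a)}S^{-1}(a_2)a_1=\varepsilon(a)$ produces
$$\sum_{(h)}h_2\,S^{-1}(h_1)\otimes h_3 \;=\; 1\otimes h \quad\text{in }H\otimes H,$$
from which I would derive the factorisation $r\#h=\sum_{(h)}(1\#h_2)\bigl((S^{-1}(h_1)\rhu r)\#1\bigr)$ in $B$. Left-multiplying by $e=1\#t$ and using that $t$ is a right integral, so $t\,h_2=\varepsilon(h_2)\,t$, the threefold Sweedler sum collapses to $e(r\#h)=\lambda(S^{-1}(h)\rhu r)\in\lambda(R)$, hence $eB\subseteq\lambda(R)$.

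The whole argument is essentially a bookkeeping exercise with Sweedler notation; the only delicate point is choosing the right combination of antipode, coassociativity, and integral identities. Notably, semisimplicity of $H$ is not logically needed for this lemma — only the existence of an integral $t$ with $\varepsilon(t)=1$ and bijectivity of the antipode $S$.
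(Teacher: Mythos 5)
Your proof is correct, but it takes a genuinely different route from the paper's. The paper constructs an explicit two-sided inverse $\lambda'\colon eB\to R$, $\sum_i r_i\#h_i\mapsto\sum_i r_i\langle\alpha,h_i\rangle$, using an integral $\alpha$ of the dual Hopf algebra $H^*$ normalized by $\langle\alpha,t\rangle=1$ together with the identity $\sum_{(t)}t_1\langle\alpha,t_2\rangle=1$; the verification that $\lambda\lambda'=\id_{eB}$ is a Sweedler computation of the same flavour as your surjectivity step. You instead split the problem: the retraction $\pi(r\#h)=S^{-1}(h)\rhu r$ gives injectivity via $\sum_{(t)}S^{-1}(t_2)t_1=\varepsilon(t)1$, and surjectivity comes from the factorization $e(r\#h)=\lambda\bigl(S^{-1}(h)\rhu r\bigr)$, which uses that $t$ is a right integral. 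Your version is more self-contained, since it avoids invoking the theory of integrals for $H^*$, at the cost of juggling both one-sided antipode identities. Two small points. First, in the surjectivity step the identity you actually need for $\sum_{(h)}h_2S^{-1}(h_1)\otimes h_3=1\otimes h$ is $\sum_{(a)}a_2S^{-1}(a_1)=\varepsilon(a)1$, obtained by applying $S^{-1}$ to the other antipode axiom $\sum_{(a)}a_1S(a_2)=\varepsilon(a)1$; you cite the mirror identity $\sum_{(a)}S^{-1}(a_2)a_1=\varepsilon(a)1$, which is the one used in the injectivity step. Both hold, so this is only a citation slip. Second, the closing remark that semisimplicity is not logically needed is a distinction without content: for a finite-dimensional Hopf algebra the existence of an integral with $\varepsilon(t)=1$ is equivalent to semisimplicity, and it is semisimplicity (via unimodularity) that entitles you to treat the integral $t$ as two-sided in the step $th_2=\varepsilon(h_2)t$.
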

\begin{proof}
	Let $H^*$ be the dual Hopf algebra of $H$, and $\alpha$ be an integral of $H^*$ with $\langle \alpha, t \rangle = 1$.
	We claim that the map
	$$\lambda': eB \to R, \qquad \sum\limits_{i} r_i \# h_i \mapsto \sum\limits_{i} r_i \langle \alpha, h_i \rangle$$
	is the inverse of $\lambda$.
	Note that $\sum\limits_{(t)} t_1 \langle \alpha, t_2 \rangle = 1$.
	For any $r \in R$,
	$$\lambda'\lambda(r) = \sum t_1 \rhu r \langle \alpha, t_2 \rangle = \langle \alpha, t \rangle r = r.$$
	For any $\sum\limits_{i} r_i \# h_i \in B$,
	\begin{align*}
		\lambda\lambda'(e(\sum_{i} r_i \# h_i))
		& = \lambda\lambda'(\sum\limits_{i, (t)} t_1 \rhu r_i \# t_2h_i) = \lambda\lambda' (\sum\limits_{i, (t)} (t_1S^{-1}h_i) \rhu r_i \# t_2) \\ 
		& = \lambda (\sum\limits_{i, (t)} (t_1S^{-1}h_i) \rhu r_i \langle \alpha, t_2 \rangle) = \lambda (\sum\limits_{i} S^{-1}h_i \rhu r_i) \\
		& = \sum\limits_{i, (t)} t_1S^{-1}h_i \rhu r_i \# t_2 = \sum\limits_{i, (t)} t_1 \rhu r_i \# t_2h_i \\
		& = e(\sum_{i} r_i \# h_i).
	\end{align*}
	Hence $\lambda$ is an isomorphism.
\end{proof}

Let's consider a map $\widetilde{t}: R \to \Hom_{R^H}(R, R^{H})$ which is defined by $\widetilde{t}(r)(x) = t \rhu (rx)$.

\begin{lem}\label{nu-widetilde-t}
	Then $\nu$ is bijective if and only if $\widetilde{t}$ is bijective.
\end{lem}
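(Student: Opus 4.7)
The plan is to realize $\widetilde{t}$ as the transport of $\nu$ along three canonical isomorphisms, so the two maps will be bijective together.

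First, I would fix the identifications. The previous lemma supplies $\lambda\colon R \xrightarrow{\cong} eB$. The computation $(r\#h)(1\#t) = \vep(h)\, r\#t$ identifies $Be$ with $R$ as right $A$-modules via $r\#t \mapsto r$; an entirely analogous calculation identifies $eBe$ with $R^H$ via $r'\#t \mapsto r'$. Together these induce a canonical bijection $\Hom_A(Be, A) \cong \Hom_{R^H}(R, R^H)$.

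Second, I would assemble the square
$$\xymatrix{R \ar[r]^-{\widetilde{t}} \ar[d]_-{\lambda} & \Hom_{R^H}(R, R^H) \ar[d]^-{\cong} \\ eB \ar[r]^-{\nu} & \Hom_A(Be, A)}$$
and check commutativity on an arbitrary $r \in R$ by testing the value against $x \in R$. Using the smash-product multiplication and coassociativity,
$$\lambda(r)\cdot(x\#t) \;=\; \sum_{(t)} (t_1 \rhu r)\# t_2 \cdot (x\#t) \;=\; \sum_{(t)} (t_1 \rhu r)(t_2 \rhu x)\# t_3 t.$$
Because $H$ is semisimple, hence unimodular, $t$ is a two-sided integral, so $t_3 t = \vep(t_3)\, t$ and the right-hand side collapses to $(t \rhu (rx))\# t$. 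Under the identification $eBe \cong R^H$ this corresponds to $t \rhu (rx) = \widetilde{t}(r)(x)$, so the diagram commutes.

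Once commutativity is verified, the conclusion is immediate: all three identifying arrows are bijective, hence $\nu$ is bijective if and only if $\widetilde{t}$ is. The only delicate step is the collapse $t_3 t = \vep(t_3)\, t$, which relies on $t$ being a two-sided integral in the semisimple Hopf algebra $H$; everything else is routine Sweedler-notation bookkeeping.
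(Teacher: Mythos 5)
Your proof is correct and follows essentially the same route as the paper, which simply asserts the commutativity of the square formed by $\lambda$, $\nu$, $\widetilde{t}$ and the canonical identification $\Hom_{R^H}(R,R^H)\cong\Hom_A(Be,A)$; you supply the Sweedler-notation verification that the paper omits. (A minor remark: the collapse $t_3t=\vep(t_3)t$ only needs $t$ to be a left integral, so the appeal to unimodularity is not strictly necessary.)
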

\begin{proof}
	The conclusion follows from the commutative diagram
	$$\xymatrix{
		R \ar[r]^{\lambda} \ar[d]^{\widetilde{t}} & eB \ar[d]^{\nu} \\
		\Hom_{R^H}(R, R^{H}) \ar[r]^-{\cong} & \Hom_A(Be, A). \\
	}$$
\end{proof}

\subsection{Homological determinant and the bijectivity of $\nu$}


In the following, let's recall firstly the definition of Artin-Schelter regular algebras \cite{AS1987}.


\begin{defn}\label{defn-AS-Gorenstein}
	A noetherian connected graded algebra $R$ is called {\it Artin-Schelter Gorenstein} (or {\it AS Gorenstein}, for short) of dimension $d$, if the following conditions hold:
	
	(1) $R$ has finite injective dimension $d$ on the left and on the right,
	
	(2) $\Ext^i_R(\kk, R) \cong \Ext^i_{R^{\op}}(\kk, R) \cong \begin{cases}
			0, & i \neq d \\
			\kk(l), & i = d \\
		\end{cases}$, for some integer $l$, where $\kk:= R/\oplus_{i > 0} R_i$. Here $l$ is called the {\it AS index} of $R$.

	If in addition, $R$ has finite global dimension and finite GK dimension, then $R$ is called {\it Artin-Schelter regular} (or {\it AS regular}, for short) of dimension $d$.
\end{defn}


The homological determinant has been an important tool for understanding the theory of Hopf algebra actions on connected graded AS Gorenstein algebras.
Let $R$ be a connected graded AS Gorenstein algebra. Assume that $R$ is a graded left $H$-module algebra, i.e., $H$ acts on $R$ homogeneously.
The homological determinant of $H$-action on $R$ is an algebra homomorphism $\hdet: H \to \kk$ (see \cite[Definition 3.3]{KKZ2009} for the definition).
If $\hdet = \vep$, then we say that the homological determinant $\hdet$ is trivial.
The graded automorphism group of $R$ is denoted by $\GrAut(R)$, and the special linear automorphism group $\SL(R)$ is the group of graded automorphisms of $R$ with homological determinant $1$:
$$\SL(R) := \{ g \in \GrAut(R) \mid \hdet(g) = 1 \}.$$

The dualizing complexes over noncommutative rings were introduced by Yekutieli in \cite{Yek1992}.
It was studied further by Van den Bergh \cite{VdB1997} and Yekutieli-Zhang \cite{YZ1999}.

\begin{defn}
	Let $R$ be a noetherian connected graded $\N$-graded algebra.
	Let $\D(\GrMod R^e)$ be the derived category of complexes of left graded $R^e$-modules.
	A bounded complex $\Omega \in \D(\GrMod R^e)$ is called a {\it dualizing complex} over $R$ if it satisfies the following conditions:
	\begin{enumerate}
		\item $\Omega$ has finite injective dimension on both sides,
		\item each cohomology $H^i(\Omega)$ is noetherian over $R$ and over $R^{\mathrm{op}}$ for every $i$,
		\item the canonical morphisms $R \to \RHom_R(\Omega, \Omega)$ and $R \to \RHom_{R^{\mathrm{op}}}(\Omega, \Omega)$ are isomorphisms in $\D(\GrMod R^e)$.
	\end{enumerate}
\end{defn}

Let $R$ be a noetherian connected graded algebra and $\m$ be the maximal graded ideal of $R$.
For any graded right $R$-module $M$, the $\m$-torsion functor $\Gamma_{\m}$ is defined to be
$$\Gamma_{\m}(M) = \{ x \in M \mid x \m^n = 0, \exists \, n \geq 0 \}.$$
For any graded left $R$-module $M$, the $\m^{\mathrm{op}}$-torsion functor $\Gamma_{\m^{\mathrm{op}}}$ is defined to be
$$\Gamma_{\m^{\mathrm{op}}}(M) = \{ x \in M \mid \m^n x = 0, \exists \, n \geq 0 \}.$$
The derived functor $\mathbf{R}\Gamma_{\m}$ (respectively, $\mathbf{R}\Gamma_{\m^{\mathrm{op}}}$) is defined on the derived category $\D^{+}(\GrMod R)$ (respectively, $\D^{+}(\GrMod R^{\mathrm{op}})$) of bounded below complexes of graded right (respectively, left) $R$-modules.

See \cite{AZ1994} for more details.

\begin{defn}
	A dualizing complex $\Omega$ over a noetherian connected graded algebra $R$ is called {\it balanced} if there are isomorphisms
	$$\mathbf{R}\Gamma_{\m}(\Omega) \cong \mathbf{R}\Gamma_{\m^{\mathrm{op}}}(\Omega) \cong \bigoplus_{n \in \N} \Hom(R_n, \Bbbk)$$
	in $\D(\GrMod R^e)$.
\end{defn}

For any $R$-$R$-bimodule $M$ and automorphism $\sigma: R \to R$, ${^{\sigma}M}$ is the $R$-$R$-bimodule with the same ground vector space $M$ and the bimodule structure is given by $x \cdot m \cdot y = \sigma(x)my$.

\begin{lem}\cite[Lemma 1.6]{KKZ2009}
	Let $R$ be a noetherian connected graded algebra. Then $R$ is AS Gorenstein if and only if $R$ admits a balanced dualizing complex of the form $^{\mu}R(-l)[d]$ where $\mu$ is a graded algebra automorphism of $R$ and where $d$ and $l$ are the integers given in Definition \ref{defn-AS-Gorenstein}. 
\end{lem}

If $R$ admits a balanced dualizing complex $^{\mu}R(-l)[d]$, then $\mu$ is called the {\it Nakayama automorphism} of $R$.
It is clear that $\mu$ is unique up to inner automorphisms of $R$.

We need the following results about the balanced dualizing complex.

\begin{lem}\label{dualizing complex}
	Let $R$ be a noetherian connected graded AS regular algebra of dimension $d$ with AS index $l$, and $H$ be a semisimple Hopf algebra.
	Suppose that $H$ acts homogeneously and inner faithfully on $R$ such that the homological determinant is trivial.
	
	(1) \cite[Section 3]{KKZ2009} 
	Then $R^H$ is AS Gorenstein of dimension $d$ with a balanced dualizing complex $\Omega(R^H):= {^{\mu}(R^H)(-l)[d]}$, where $\mu$ is the Nakayama automorphism of $R^H$.
	
	(2) \cite[Corollary 4.17]{YZ1999}
	Then $\Omega(R) \cong \RHom_{R^H}(R, \Omega(R^H))$ in $\D(\GrMod R)$, where $\Omega(R)$ is the balanced dualizing complex of $R$.
\end{lem}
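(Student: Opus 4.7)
The statement is a compilation of two well-known results in the theory of Hopf actions and dualizing complexes, so my plan is to organize the proof around verifying the hypotheses of the two cited theorems and then invoking them directly.

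For part (1), my plan is to appeal to the main structural theorem of Kirkman--Kuzmanovich--Zhang \cite{KKZ2009}. The hypotheses to check are that $R$ is noetherian connected graded AS regular, $H$ is semisimple, and the $H$-action is homogeneous with trivial homological determinant, all of which are in force. Under these hypotheses, \cite[Section 3]{KKZ2009} shows that $R^H$ inherits the AS Gorenstein property with the same injective dimension $d$; the key ingredient is that trivial homological determinant forces the canonical module of $R^H$ to be free of rank one up to a Nakayama twist, and the AS index is preserved from $R$ through the trace formula. General theory of noetherian AS Gorenstein algebras (going back to Van den Bergh and Yekutieli--Zhang) then packages this into the balanced dualizing complex ${}^{\mu}(R^H)(-l)[d]$, where $\mu$ is the Nakayama automorphism.

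For part (2), my plan is to apply the change-of-rings formula of Yekutieli--Zhang \cite[Corollary 4.17]{YZ1999}. The hypotheses there require a finite homomorphism of noetherian connected graded algebras $R^H \hookrightarrow R$ such that the domain admits a balanced dualizing complex. The first condition is standard: since $H$ is finite-dimensional and semisimple and acts homogeneously on the noetherian connected graded algebra $R$, the extension $R^H \subseteq R$ is module-finite on both sides (this is the classical Montgomery/Lorenz finiteness result for semisimple Hopf actions). The second condition is exactly part (1). Hence the formula applies and yields $\Omega(R) \cong \RHom_{R^H}(R, \Omega(R^H))$ in $\D(\GrMod R)$.

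The main subtlety, rather than an outright obstacle, is ensuring that the Nakayama twist and shift appearing in $\Omega(R^H)$ really do match the AS index $l$ and dimension $d$ of $R$ itself; this is the content of the trivial homological determinant assumption, which guarantees that the $H$-action fixes the Frobenius form of the top local cohomology of $R$ and therefore descends cleanly to $R^H$. Once this is granted, both parts are direct citations.
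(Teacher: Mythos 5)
Your proposal matches the paper exactly: the lemma is stated there as a direct citation of \cite[Section 3]{KKZ2009} for part (1) and \cite[Corollary 4.17]{YZ1999} for part (2), with no further argument given. Your verification of the hypotheses (semisimplicity of $H$ giving module-finiteness of $R^H \subseteq R$, and the trivial homological determinant ensuring the AS Gorenstein property with the stated dimension and index descends to $R^H$) is correct and is exactly what is implicitly being relied upon.
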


\begin{lem}\label{nu-bijective}
	Let $H$ be a semisimple Hopf algebra, and $R$ be a noetherian connected graded AS regular algebra. Suppose that $H$ acts homogeneously and inner faithfully on $R$.
	If the homological determinant is trivial, then the map $\nu$ is bijective.
\end{lem}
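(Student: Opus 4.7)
By Lemma \ref{nu-widetilde-t} it suffices to prove that
$$\widetilde{t}: R \To \Hom_{R^H}(R, R^H), \qquad r \mapsto \bigl( x \mapsto t \rhu (rx) \bigr)$$
is bijective. The plan is first to record, by a direct unwinding of the smash-product axioms, that $\widetilde{t}$ is a degree-zero morphism of graded right $R$-modules, and that $\widetilde{t}(1) \ne 0$ (evaluated at $1$ it returns $t \rhu 1 = \vep(t) = 1$).

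The core of the argument is to translate the dualizing-complex isomorphism
$$\Omega(R) \;\cong\; \RHom_{R^H}(R, \Omega(R^H))$$
from Lemma \ref{dualizing complex}(2) into the concrete graded right $R$-module isomorphism
$$\Hom_{R^H}(R, R^H) \;\cong\; R$$
together with the vanishing $\Ext^{\ge 1}_{R^H}(R, R^H) = 0$. Using Lemma \ref{dualizing complex}(1), write $\Omega(R) = {}^{\sigma} R(-l)[d]$ and $\Omega(R^H) = {}^{\mu}(R^H)(-l)[d]$; the two copies of $(-l)$ and $[d]$ agree precisely because the homological determinant is trivial. Pulling the shift and translation past $\RHom_{R^H}(R,-)$ and observing that the left twist $\mu$ does not affect the right $R^H$-structure of ${}^{\mu}(R^H)$, the isomorphism reads
$${}^{\sigma} R(-l)[d] \;\cong\; \RHom_{R^H}(R, R^H)(-l)[d]$$
in $\D(\GrMod R)$. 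Taking $(-d)$th cohomology of both sides, cancelling the $(-l)$, and discarding the left twist $\sigma$ (which only affects the left $R$-module structure) yields both of the desired statements.

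To conclude, compose $\widetilde{t}$ with this isomorphism to obtain a degree-zero graded right $R$-module endomorphism of $R$. Since $\End_R(R_R) \cong R$ as graded algebras, its degree-zero component is $R_0 = \kk$, so the endomorphism is multiplication by a scalar; the scalar is nonzero because $\widetilde{t}(1) \ne 0$, hence a unit, and therefore $\widetilde{t}$ is an isomorphism.

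The main obstacle I anticipate is the careful bookkeeping of Nakayama twists and degree shifts in the second paragraph, in particular confirming that the isomorphism $\Hom_{R^H}(R, R^H) \cong R$ survives as a genuine graded right $R$-module isomorphism with no residual shift after extracting cohomology. The remaining steps are essentially formal once this identification is in place.
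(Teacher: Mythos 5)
Your proposal is correct and follows essentially the same route as the paper: reduce to $\widetilde{t}$ via Lemma \ref{nu-widetilde-t}, extract the graded right $R$-module isomorphism $\Hom_{R^H}(R,R^H)\cong R$ from Lemma \ref{dualizing complex}, and conclude because a nonzero degree-zero graded right $R$-module endomorphism of $R$ is multiplication by a nonzero scalar. The paper simply states these steps more tersely, leaving the twist-and-shift bookkeeping you spell out implicit.
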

\begin{proof}
	According to Lemma \ref{dualizing complex}, $\Hom_{R^H}(R, R^H) \cong R$ as graded right $R$-modules.
	Then the nonzero graded $R$-module morphism $\widetilde{t}: R \to \Hom_{R^H}(R, R^H)$ is bijective.
	Thus $\nu$ is bijective by Lemma \ref{nu-widetilde-t}.
\end{proof}

\subsection{The injectivity of the Auslander map}

Under some mild assumptions, the primeness of $R\#H$ is equivalent to the faithfulness of the action of $R\#H$ on $R$, see \cite[Theorem 3.1]{BCF1990} and \cite[Lemma 3.10]{BHZ2019} for example.
Obviously, the Auslander map is injective if and only if $R$ is a faithful $R\#H$-module.

The next lemma follows immediately from \cite[Lemma 4.2]{Skr2018} and \cite[Corollary 3.4]{BCF1990}.

\begin{lem}\label{varphi-injective-lemma}
	Let $H$ be a semisimple Hopf algebra, and $R$ be a domain which is a noetherian $H$-module algebra.
	Then $R\#H$ is prime if and only if the Auslander map is injective.
\end{lem}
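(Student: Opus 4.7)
The plan is to reduce the biconditional to the two cited results by means of the observation made just before the lemma statement. That remark notes that $\varphi$ is injective if and only if $R$ is faithful as a left $R\#H$-module; indeed, under the identification $Be \cong R$ one has $\varphi(b) = 0$ exactly when $b \cdot R = 0$, so $\Ker \varphi = \Ann_{R\#H}(R)$. Thus the entire task is to prove, for a noetherian domain $R$ carrying a semisimple Hopf $H$-action, the equivalence: $R\#H$ is prime if and only if $\Ann_{R\#H}(R) = 0$.

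For the forward direction, set $\mathfrak{a} = \Ann_{R\#H}(R)$; this is a two-sided ideal of $R\#H$, and since $R \hookrightarrow R\#H$ acts faithfully on itself we have $\mathfrak{a} \cap R = 0$. If $R\#H$ is prime and $\mathfrak{a} \neq 0$, one derives a contradiction by combining $\mathfrak{a}$ with the nonzero ideal of $R\#H$ generated by $R$. The sharp form of this argument, tailored to a semisimple Hopf action on a prime noetherian algebra, is the content of \cite[Corollary 3.4]{BCF1990}, which can be quoted directly here since a domain is certainly prime.

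For the converse, faithfulness of $R$ as an $R\#H$-module together with the hypotheses that $R$ is a noetherian domain and $H$ is semisimple is exactly the input required by \cite[Lemma 4.2]{Skr2018}, whose conclusion is the primeness of $R\#H$. Combining the two directions yields the lemma.

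The main obstacle, which is bibliographical rather than conceptual, is to match the precise statements of the two cited references to the language of the present lemma: one is phrased in terms of the inner faithful quotient of $H$ acting on $R$, while the other uses the language of faithful modules over $R\#H$. Under the domain hypothesis on $R$ both formulations reduce cleanly to the single condition $\Ann_{R\#H}(R) = 0$, and once that dictionary is in place no further calculation is required.
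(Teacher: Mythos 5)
Your proposal matches the paper's treatment: the paper likewise observes that injectivity of $\varphi$ is equivalent to faithfulness of $R$ as an $R\#H$-module (i.e.\ $\Ker\varphi = \Ann_{R\#H}(R)$) and then states that the lemma ``follows immediately from'' \cite[Lemma 4.2]{Skr2018} and \cite[Corollary 3.4]{BCF1990}, which is exactly the reduction you carry out. Your write-up is in fact slightly more explicit than the paper's about which citation supplies which direction of the biconditional.
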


Let's recall some results about Galois theory of division rings. See \cite{Mon1993} for the definition of Hopf Galois extension.

\begin{thm}\cite[Theorem 3.1]{CE2017}\label{group-acts-on-division-algebra}
	Let $D$ be a division algebra, and $G$ be a finite subgroup of $\Aut(D)$. Then the extension $D^G \subseteq D$ is a $(\kk G)^*$-Galois extension.
\end{thm}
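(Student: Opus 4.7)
The plan is to verify the Galois condition directly by constructing an explicit inverse to the canonical Galois map, using a noncommutative analogue of the classical "dual basis" lemma from Galois theory.

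First, I would encode the $G$-action on $D$ as a right $(\kk G)^*$-comodule algebra structure. Writing $\{p_g\}_{g \in G} \subset (\kk G)^*$ for the basis dual to $G$, the coaction is
$$\rho: D \To D \otimes (\kk G)^*, \qquad \rho(d) = \sum_{g \in G} g(d) \otimes p_g.$$
A routine check shows that $\rho$ is an algebra homomorphism with $D^{co(\kk G)^*} = D^G$. Under the canonical identification $D \otimes_{\kk}(\kk G)^* \cong \prod_{g \in G} D$, the canonical Galois map $\beta: D \otimes_{D^G} D \to D \otimes (\kk G)^*$, $a \otimes b \mapsto (a \otimes 1)\rho(b)$, becomes
$$\beta: D \otimes_{D^G} D \To \prod_{g \in G} D, \qquad a \otimes_{D^G} b \longmapsto \bigl(a\, g(b)\bigr)_{g \in G},$$
which is well-defined because $g$ fixes $D^G$. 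Observe that $\beta$ is $(D,D)$-equivariant for the obvious outer bimodule structures, so in particular its image is a left $D$-submodule of $\prod_g D$.

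Second, I would establish the key input: elements $u_1,\dots,u_n,v_1,\dots,v_n \in D$ with $\sum_{i} u_i\,g(v_i) = \delta_{g,e}$ for all $g \in G$. Over a field this is just linear independence of characters; over a division ring it is a classical theorem in the Galois theory of division rings (Jacobson, Kharchenko), whose proof proceeds by partitioning $G$ into cosets modulo $G \cap \Inn(D)$ and using Skolem-Noether to absorb inner automorphisms, leaving an essentially outer action to which the usual Dedekind-Artin independence argument applies. From this dual basis one also recovers the degree equality $[D:D^G] = |G|$ on either side.

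Third, I would deduce bijectivity. The dual basis gives $\beta\bigl(\sum_i u_i \otimes_{D^G} v_i\bigr) = \delta_{e}$, the characteristic vector of the identity. Replacing $v_i$ by $g_0^{-1}(v_i) \in D$ for any fixed $g_0 \in G$ yields $\beta\bigl(\sum_i u_i \otimes_{D^G} g_0^{-1}(v_i)\bigr) = \delta_{g_0}$, since $g(g_0^{-1}(v_i)) = (gg_0^{-1})(v_i)$. Hence every standard basis vector lies in the image, and using the left $D$-linearity of $\beta$ we conclude $\beta$ is surjective. Finally, both $D \otimes_{D^G} D$ and $\prod_{g\in G} D$ are free left $D^G$-modules of rank $|G|^2$ by the degree equality, so surjectivity forces bijectivity.

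The main obstacle is the noncommutative dual basis lemma: linear independence of automorphisms of $D$ must be handled carefully when some elements of $G$ are inner, and indeed producing the system $\{u_i,v_i\}$ is essentially the whole content of Jacobson's Galois theory for division rings. Everything else in the proof is formal once this tool is available.
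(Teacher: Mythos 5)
The paper offers no proof of this statement---it is imported verbatim from [CE2017]---so your argument has to stand entirely on its own, and it does not. The formal part (surjectivity of $\beta$ from the dual basis together with left $D$-linearity, then bijectivity from the rank count) is fine, but the ``key input'' is exactly where the whole theorem lives, and your justification of it is wrong. The existence of $u_i, v_i \in D$ with $\sum_i u_i\, g(v_i) = \delta_{g,e}$ for all $g \in G$ already forces $[D:D^G] = |G|$, and that equality is \emph{false} for a general finite subgroup of $\Aut(D)$ when inner automorphisms are present: take $D = \mathbb{H}$ over $\mathbb{R}$ and $G = \langle c_\zeta \rangle \cong \mathbb{Z}/4\mathbb{Z}$, where $c_\zeta$ is conjugation by $\zeta = (1+i)/\sqrt{2}$ (so $\zeta^2 = i$, $\zeta^4 = -1$, and $c_\zeta$ has order $4$). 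Then $D^G = C_{\mathbb{H}}(\zeta) = \mathbb{C}$ and $[D:D^G] = 2 < 4 = |G|$, so no dual basis exists and $D/D^G$ is not $(\mathbb{R}G)^*$-Galois. The classical results you invoke (Jacobson, Kharchenko) give $[D:D^G]$ equal to the \emph{reduced} order of $G$, namely $[G:G_{\mathrm{inn}}]\cdot \dim_{Z(D)} Z(D)[\Gamma]$, where $\Gamma \subseteq D^{\times}$ is the preimage of $G_{\mathrm{inn}} = G \cap \Inn(D)$; this is at most $|G|$ and can be strictly smaller, as above. So the ``classical dual basis lemma'' you defer to does not exist in the generality you need.

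What makes the cited theorem true is the paper's standing hypothesis that $\kk$ is algebraically closed of characteristic zero and that $G$ acts by $\kk$-algebra automorphisms---a hypothesis your sketch never uses. The genuine content is to show that under this hypothesis the inner part contributes its full order, i.e.\ $\dim_{Z(D)} Z(D)[\Gamma] = |G_{\mathrm{inn}}|$. Already in the cyclic case this is where roots of unity enter: if $d^m \in Z(D)$ with $m$ minimal, then the minimal polynomial of $d$ over $Z(D)$ divides $X^m - d^m$, whose roots are $\zeta d$ with $\zeta \in \mu_m \subseteq \kk \subseteq Z(D)$, so its constant term lies in $Z(D)$ and forces $d^k \in Z(D)$ for $k = \deg$, whence $k = m$; over $\mathbb{R}$ this breaks, which is exactly what the quaternion example exploits. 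Your plan of ``absorbing inner automorphisms by Skolem--Noether and reducing to the outer case'' is the right skeleton, but without the additional argument controlling $Z(D)[\Gamma]$ it only yields the reduced order, not $|G|$. As written, the proposal assumes the essential point rather than proving it.
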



\begin{lem}\cite[Theorem A.I.4.2]{NO1982}\label{group-graded-division-algebra}
	Let $G$ be a finite group and $D = \oplus_{g \in G} D_g$ be a $G$-graded division ring.
	Suppose that $(\kk G)^*$-action on $D$ is inner faithful, that is, $D_g \neq 0$ for all $g \in G$.
	Then $D$ is strongly $G$-graded, that is, $D_1 \subseteq D$ is a $\kk G$-Galois extension.
\end{lem}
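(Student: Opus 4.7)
The plan is to reduce the claim to showing that $D$ is strongly $G$-graded, i.e.\ that $D_g D_h = D_{gh}$ for all $g, h \in G$, and then invoke the classical equivalence between strongly $G$-graded rings and $\kk G$-Galois extensions. The containment $D_g D_h \subseteq D_{gh}$ is automatic from the grading, so the task is the reverse inclusion; a standard reduction shows that it suffices to prove $1 \in D_g D_{g^{-1}}$ for every $g \in G$. Indeed, granting this, any $d \in D_1$ satisfies $d = d \cdot 1 \in D_1 \, D_g \, D_{g^{-1}} \subseteq D_g D_{g^{-1}}$, and then $D_{gh} = D_{gh} \cdot 1 \subseteq D_{gh} D_{h^{-1}} D_h \subseteq D_g D_h$ gives the general equality.

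To produce the required decomposition of $1$, I would use the inner-faithfulness hypothesis, which says $D_g \neq 0$ for every $g$, to pick a nonzero homogeneous element $x \in D_g$. Since $D$ is a division ring, $x$ has a two-sided inverse $x^{-1} \in D$, which decomposes into homogeneous components $x^{-1} = \sum_{h \in G} y_h$ with $y_h \in D_h$. Multiplying gives $1 = x x^{-1} = \sum_{h} x y_h$ with $x y_h \in D_{gh}$, and matching components against $1 \in D_1$ forces $x y_{g^{-1}} = 1$ together with $x y_h = 0$ for every $h \neq g^{-1}$. Since $D$ is a domain and $x \neq 0$, the latter equations force $y_h = 0$ for $h \neq g^{-1}$; hence $x^{-1} = y_{g^{-1}} \in D_{g^{-1}}$ and $1 = x y_{g^{-1}} \in D_g D_{g^{-1}}$, as needed.

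Having established strong grading, the second step is to apply the standard equivalence (see, e.g., \cite[Theorem 8.1.7]{Mon1993}): a $G$-graded algebra $D = \oplus_{g \in G} D_g$ is strongly $G$-graded if and only if the inclusion $D_1 \subseteq D$ is a $\kk G$-Galois extension, where the comodule structure is the one determined by the grading. The only substantive content of the argument is the production of $1 \in D_g D_{g^{-1}}$ out of a nonzero homogeneous element, and this is the step where both hypotheses are genuinely used: the division-ring assumption gives the existence of $x^{-1}$, while the inner-faithfulness assumption $D_g \neq 0$ guarantees that a nonzero $x \in D_g$ exists in the first place. I do not anticipate any serious obstacle beyond this single observation.
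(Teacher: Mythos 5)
Your argument is correct. Note, however, that the paper does not prove this lemma at all: it is quoted directly from N\u{a}st\u{a}sescu--Van Oystaeyen \cite[Theorem A.I.4.2]{NO1982}, so there is no in-paper proof to compare against. Your write-up supplies exactly the standard argument behind that citation: since $D$ is a division ring, a nonzero $x \in D_g$ has a two-sided inverse, and decomposing $x^{-1}$ into homogeneous components and using that $D$ has no zero divisors forces $x^{-1} \in D_{g^{-1}}$, whence $1 \in D_g D_{g^{-1}}$ and the strong grading follows by the reduction you describe; the passage from strong grading to the $\kk G$-Galois property is indeed \cite[Theorem 8.1.7]{Mon1993} (Ulbrich's theorem), with the $\kk G$-coaction the one determined by the grading. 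The inner-faithfulness hypothesis enters only to guarantee $D_g \neq 0$ for every $g$, exactly as you say. A self-contained proof like yours is arguably preferable here, since it makes transparent that the full division-ring hypothesis (rather than merely invertibility of nonzero homogeneous elements, the setting of a ``gr-division ring'') is more than enough.
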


Now we can prove the following result concerning the injectivity of the Auslander map for group actions and coactions.

\begin{prop}\label{Auslander-map-injective}
	Let $R$ be a noetherian domain, and $H$ be a semisimple Hopf algebra acting on $R$ inner faithfully. Suppose that $H = \kk G$ or $(\kk G)^*$ where $G$ is a finite group. Then the Auslander map $\varphi: R \# H \to \End_{R^H}(R)$ is injective.
\end{prop}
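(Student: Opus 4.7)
The plan is to apply Lemma \ref{varphi-injective-lemma}, which reduces the injectivity of $\varphi$ to the primeness of $R\#H$. The unifying strategy in both cases is to enlarge $R$ to an appropriate division ring of quotients $D$, invoke Hopf Galois theory (via Theorem \ref{group-acts-on-division-algebra} or Lemma \ref{group-graded-division-algebra}) to exhibit $D\#H$ as a simple Artinian ring, and then descend primeness to $R\#H$ by presenting it as a noetherian classical order inside $D\#H$ and applying Goldie's theorem.

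For the action case $H = \kk G$, inner faithfulness of the $\kk G$-action means $G \hookrightarrow \Aut(R)$, and since $R$ is a noetherian domain this extends uniquely to a faithful action $G \hookrightarrow \Aut(D)$ on $D = \Frac(R)$. Theorem \ref{group-acts-on-division-algebra} then produces a $(\kk G)^*$-Galois extension $D^G \subseteq D$, and standard Hopf Galois theory yields an isomorphism $D\#\kk G \cong \End_{D^G}(D)$. Because $D^G$ is a division ring and $D$ is a finite-dimensional free $D^G$-module, this endomorphism ring is a matrix algebra over $D^G$, hence simple Artinian.

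The coaction case $H = (\kk G)^*$ proceeds in parallel. The $(\kk G)^*$-action corresponds to a $G$-grading $R = \bigoplus_{g \in G} R_g$, with inner faithfulness amounting to $R_g \neq 0$ for every $g \in G$. I would form a graded ring of quotients $D$ by localizing at the (graded) Ore set of nonzero homogeneous elements of $R$, obtaining a $G$-graded division ring with $D_g \neq 0$ for all $g$. Lemma \ref{group-graded-division-algebra} then guarantees that $D$ is strongly $G$-graded, i.e., that $D_1 \subseteq D$ is $\kk G$-Galois, and Hopf Galois theory yields $D\#(\kk G)^* \cong \End_{D_1}(D)$. Since $D_1$ is a division ring and $D$ is free of rank $|G|$ over $D_1$ (each $D_g$ being one-dimensional), this is again simple Artinian.

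The final step is to check that in each case $R\#H$ is a noetherian classical order in $D\#H$. Concretely, the image of $R \setminus \{0\}$ (respectively, of the set of nonzero homogeneous elements of $R$ in the coaction case) in $R\#H$ via $r \mapsto r \# 1$ forms a right and left Ore set of regular elements whose localization recovers all of $D\#H$. The right Ore identity reduces to a short calculation with the smash product multiplication, solvable because $G$ acts by automorphisms of $R$ (and symmetrically for the coaction). Granting this, $R\#H$ is a noetherian Goldie order in the simple Artinian ring $D\#H$, so Goldie's theorem forces $R\#H$ to be prime, and Lemma \ref{varphi-injective-lemma} finishes the proof. The main technical obstacle I expect is the Ore verification in the coaction case, since the graded localization and its compatibility with the smash product structure are slightly less routine than in the ungraded action case.
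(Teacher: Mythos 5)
Your proposal is correct and follows essentially the same route as the paper: reduce to primeness of $R\#H$ via Lemma \ref{varphi-injective-lemma}, pass to a quotient division ring, use Theorem \ref{group-acts-on-division-algebra} (resp.\ Lemma \ref{group-graded-division-algebra}) to see that the smash product over the quotient is simple Artinian, and descend primeness to $R\#H$. The only real difference is that the paper cites \cite{SVO2006} for the extension of the $H$-action to $Q(R)$, the semiprimeness of $R\#H$, and the identification $Q(R\#H)\cong Q(R)\#H$, rather than verifying the Ore conditions by hand as you propose.
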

\begin{proof}
	Let $Q(R)$ be the quotient division ring of $R$.
	Note that the $H$-module structure on $R$ has a unique extension to $Q(R)$ with respect to which $Q(R)$ becomes a left $H$-module algebra \cite[Theorem 2.2]{SVO2006}.
	Since $R\#H$ is semiprime by \cite[Theorem 0.5]{SVO2006}, $R\#H$ has a semisimple quotient ring $Q(R\#H)$ which is isomorphic to $Q(R)\#H$.
	By Theorem \ref{group-acts-on-division-algebra} and Lemma \ref{group-graded-division-algebra}, $Q(R)^H \subset Q(R)$ is a Galois extension.
	Hence the quotient ring $Q(R\#H)$ is simple by \cite[Theorem 8.3.7]{Mon1993}.
	Then $R\#H$ is prime, and $\varphi$ is injective by Lemma \ref{varphi-injective-lemma}.
\end{proof}

\subsection{Auslander theorem for PI AS regular algebras}

In this subsection, our main results are stated and proved.
To prove Theorem \ref{Auslander-thm-for-PI-rings}, we need several lemmas.

\begin{lem}\cite{Mon1993}\label{invariant-subring-lemma}
	Let $H$ be a semisimple Hopf algebra, $R$ be a noetherian $H$-module algebra.
	Then $R^H$ is also noetherian, and $R$ is a finitely generated $R^H$-module.
\end{lem}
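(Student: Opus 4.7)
The plan is to exploit the normalized two-sided integral $t \in H$ (with $\varepsilon(t) = 1$, which exists by semisimplicity of $H$) and the resulting Reynolds-type trace
\[
\mathrm{tr}\colon R \To R^H, \qquad r \mapsto t \rhu r.
\]
Using the module algebra identity $h \rhu (rs) = \sum (h_1 \rhu r)(h_2 \rhu s)$ together with the fact that $h \rhu s = \varepsilon(h) s$ for every $s \in R^H$, I would first check that $\mathrm{tr}$ is a homomorphism of $R^H$-bimodules. Since $\varepsilon(t) = 1$ it restricts to the identity on $R^H$, so it splits the inclusion $R^H \hookrightarrow R$ and realizes $R^H$ as an $R^H$-bimodule direct summand of $R$.

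For the noetherian claim, the strategy is to transport chains of ideals through the trace splitting. Given an ascending chain $I_1 \subseteq I_2 \subseteq \cdots$ of left ideals of $R^H$, the chain $R I_1 \subseteq R I_2 \subseteq \cdots$ of left ideals of $R$ stabilizes by noetherianity of $R$. Applying the right $R^H$-linear surjection $\mathrm{tr}$, one gets $\mathrm{tr}(R I_n) = \mathrm{tr}(R)\cdot I_n = R^H I_n = I_n$, so the original chain in $R^H$ stabilizes. The argument for right ideals is symmetric, giving both left and right noetherianity of $R^H$.

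The main obstacle is showing that $R$ is finitely generated as an $R^H$-module; unlike noetherianity, the trace splitting by itself does not suffice. The classical route (Skryabin's invariant theorem, encoded in the Montgomery reference) passes through the smash product $B = R\#H$, which is noetherian because it is finitely generated and free of rank $\dim_\kk H$ as a right $R$-module. The idempotent $e = 1\#t$ furnishes the identifications $eBe \cong R^H$ and $Be \cong R$ as a $B$-$R^H$-bimodule, and $R$ is cyclic over $B$. Combining the noetherian chain conditions on $B$ applied to submodules of the form $BM$ for $R^H$-submodules $M \subseteq R$, with the fact that the trace splitting forces $M = \mathrm{tr}(BM)$, one extracts a finite generating set for $R$ over $R^H$. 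This transfer step from $B$-finite generation to $R^H$-finite generation is where the real work of the lemma lies.
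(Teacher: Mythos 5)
The paper offers no argument for this lemma (it is quoted from Montgomery's book), so I am measuring your proposal against the standard proof there. Your first two steps are correct and are exactly that standard argument: the trace $\mathrm{tr}(r) = t \rhu r$ is an $R^H$-bimodule retraction of the inclusion $R^H \hookrightarrow R$, and the chain argument $I_n \mapsto RI_n$ together with $\mathrm{tr}(RI_n) = \mathrm{tr}(R)I_n = I_n$ proves that $R^H$ is left (and symmetrically right) noetherian.

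The finite-generation half, however, has a genuine gap, and the specific mechanism you propose fails. For an $R^H$-submodule $M \subseteq R$ the set $\mathrm{tr}(BM)$ lies inside $R^H$, so the asserted identity $M = \mathrm{tr}(BM)$ can only hold when $M \subseteq R^H$; for general $M$ the trace does not restrict to the identity on $M$ (its elements are not $H$-invariant, so $\mathrm{tr}(rm)=\sum (t_1\rhu r)(t_2 \rhu m)$ does not collapse), and there is no retraction $BM \to M$. This is not a removable blemish: the statement ``$Be$ is finitely generated as a right $eBe$-module'' is not a formal consequence of $B$ being noetherian and $e$ idempotent, so a genuine extra input is required, and you explicitly defer ``the real work'' without supplying it. The standard repair is to run your own chain argument one level up: $B = R\#H$ is finitely generated, hence noetherian, as a left $R$-module; its invariant subspace $(1\#t)B = eB$ is isomorphic to $R$ as a left $R^H$-module via $r \mapsto \sum (t_1 \rhu r)\# t_2$; and for a left $R^H$-submodule $N \subseteq eB$ one now does have $(1\#t)(RN) = \mathrm{tr}(R)N = N$, because every element $n \in N$ satisfies $(1\#h)n = \varepsilon(h)n$. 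Thus ACC for $R$-submodules of $B$ transfers to ACC for $R^H$-submodules of $eB \cong R$, which yields finite generation. With that substitution your outline becomes a complete proof.
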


\begin{lem}\label{Z(R)-cap-R^H}
	Let $H$ be a semisimple Hopf algebra, and $R$ be a left $H$-module algebra with center $Z(R)$. Suppose that $Z(R)$ is noetherian and $R$ is a finitely generated $Z(R)$-module.  If $H = \kk G$ or $(\kk G)^*$ where $G$ is a finite group, then $R^H$ is a finitely generated module over $Z = Z(R) \cap R^H$.
\end{lem}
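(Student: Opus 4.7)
The plan is to reduce the lemma to the stronger statement that $R$ itself is a finitely generated $Z$-module. Since $R^H \subseteq R$ is a $Z$-submodule, once $Z$ is seen to be noetherian (by Eakin--Nagata applied to the inclusion of commutative rings $Z \subseteq Z(R)$, provided $Z(R)$ has been shown finite over $Z$), the finite generation of $R^H$ over $Z$ follows at once from the noetherian property. Because $R$ is finite over $Z(R)$ by hypothesis, the core task reduces to showing that $Z(R)$ is a finitely generated $Z$-module in each of the two cases.

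When $H = \kk G$, the group $G$ acts on $R$ by algebra automorphisms, and so restricts to the commutative noetherian ring $Z(R)$ with fixed ring $Z(R)^G = Z(R) \cap R^G = Z$. The classical Hilbert--Noether theorem for finite groups acting on commutative noetherian rings then gives that $Z$ is noetherian and $Z(R)$ is a finitely generated $Z$-module, settling this case.

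When $H = (\kk G)^*$, the action is equivalent to a $G$-grading $R = \bigoplus_{g \in G} R_g$ with $R^H = R_e$, and the difficulty is that $Z(R)$ need not be a graded subring of $R$ when $G$ is non-abelian. The idea is to decompose $Z(R)$ by conjugacy classes. For $z = \sum_g z_g \in Z(R)$ and $r_h \in R_h$, comparing the $R_{gh}$- and $R_{hg}$-components of the identity $z r_h = r_h z$ yields the twisted-centrality relation
\[
z_g \cdot r_h \;=\; r_h \cdot z_{h^{-1} g h}, \qquad g, h \in G,\ r_h \in R_h.
\]
From this one checks that for each conjugacy class $C$ of $G$ the partial sum $z^C := \sum_{g \in C} z_g$ is itself central, giving a $Z$-module decomposition $Z(R) = \bigoplus_C Z(R)^C$ with $Z(R)^C := Z(R) \cap \bigl(\bigoplus_{g \in C} R_g\bigr)$ and $Z(R)^{\{e\}} = Z$. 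Since $G$ has only finitely many conjugacy classes, the task reduces to showing that each $Z(R)^C$ is a finitely generated $Z$-module.

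The main obstacle lies in this final step for non-trivial $C$. Here I would exploit Lemma \ref{invariant-subring-lemma}, which gives that $R$ is a finitely generated $R_e$-module, so that each component $R_g$ is noetherian as an $R_e$-module. Fixing $g_0 \in C$ and considering the $Z$-linear projection $Z(R)^C \to R_{g_0}$, $z \mapsto z_{g_0}$, the twisted-centrality relation reconstructs every other component $z_g$ of $z$ from $z_{g_0}$ via elements $r_h \in R_h$ conjugating $g_0$ to $g$; combining this rigidity with noetherianness of $Z(R)$ and of the $R_e$-modules $R_g$, one should force $Z(R)^C$ to be a finitely generated $Z$-submodule. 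The hard part will be making precise the near-injectivity of this projection and extracting $Z$-finite generation without circularly invoking $Z$-finiteness of $R$; the PI property of $R$ (automatic since $R$ is finite over its center) should be the crucial extra tool for ruling out pathological annihilators in this argument.
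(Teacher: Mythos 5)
Your reduction and your treatment of the case $H = \kk G$ are fine and match the paper (which invokes its Lemma \ref{invariant-subring-lemma} in place of Hilbert--Noether, applied to the $G$-action restricted to $Z(R)$). Your conjugacy-class decomposition of $Z(R)$ in the case $H = (\kk G)^*$ is also correct as far as it goes: the twisted-centrality relation $z_g r_h = r_h z_{h^{-1}gh}$ is right, and the partial sums $z^C$ are indeed central. But the proof has a genuine gap exactly where you flag it: you never establish that $Z(R)^C$ is a finitely generated $Z$-module for a nontrivial class $C$, and the route you sketch does not close. To bound the image of the projection $Z(R)^C \to R_{g_0}$, $z \mapsto z_{g_0}$, you would need $R_{g_0}$ to be a noetherian $Z$-module, which presupposes that $R$ (or at least $R_e$) is finite over $Z$ --- the very thing you set out to prove; this is the circularity you yourself acknowledge. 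Moreover, injectivity of that projection requires deducing $z_{h^{-1}g_0h}=0$ from $R_h z_{h^{-1}g_0h}=0$, which needs primeness or domain hypotheses that the lemma does not assume (it is stated for an arbitrary $H$-module algebra finite over its noetherian center, with no inner-faithfulness, so some $R_h$ may even vanish). The appeal to the PI property at the end is a hope, not an argument.

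The paper avoids all of this by never trying to show $Z(R)$ is finite over $Z$ in the $(\kk G)^*$ case; it targets $R_e$ directly. Since $Z(R)R_e$ is a finitely generated $Z(R)$-submodule of $R$, one may choose generators $x_1,\dots,x_n \in R_e$. Given $x \in R_e$, write $x = \sum_i z_i x_i$ with $z_i \in Z(R)$ and split $z_i = u_i + v_i$ with $u_i \in R_e$ and $v_i \in \oplus_{g \neq e} R_g$; comparing $G$-degrees gives $x = \sum_i u_i x_i$, and comparing the degree-$g$ components of $r z_i = z_i r$ for homogeneous $r \in R_g$ gives $r u_i = u_i r$, so $u_i \in Z(R) \cap R_e = Z$. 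Hence $R_e = \sum_i Z x_i$ with no noetherian bootstrap and no primeness needed. If you want to salvage your approach, you would have to supply the missing finiteness argument for the $Z(R)^C$; the paper's two-line component comparison is the cleaner path.
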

\begin{proof}
	If $H = \kk G$, then $Z(R) \cap R^G = Z(R)^G$. Hence $Z(R)$ is a finitely generated $Z$-module by Lemma \ref{invariant-subring-lemma}.
	
	Now assume that $H = (\kk G)^*$. Hence $R = \oplus_{g \in G} R_g$ is a $G$-graded algebra.
	By our assumption,  $Z(R)R_1$ is also a finitely generated $Z(R)$-module. Let $x_1, \dots, x_n \in R_1$ such that $Z(R)R_1 = \sum_{i=1}^n Z(R)x_i$.
	For any $x \in R_1$, there exists $z_i \in Z(R)$, $u_i \in R_1$ and $v_i \in \oplus_{g \neq 1} R_g$ such that $x = \sum_{i=1}^n z_ix_i$ and $z_i = u_i + v_i$.
	Since $R$ is $G$-graded and $x$ is a homogeneous element, $x = \sum_{i=1}^n u_ix_i$.
	For all $g \in G$ and $r \in R_{g}$, $ru_i = u_ir$ because $ru_i + rv_i = rz_i = z_ir = u_ir + v_ir$. This implies that $u_i \in Z(R)$ for all $i$. So $R^H = R_1 = \sum_{i=1}^n Zx_i$.
	Hence $R^H$ is finitely generated as a $Z$-module.
\end{proof}

Let $R$ be a noetherian prime ring with artinian simple quotient ring $Q$.
Then $R$ is called an order in $Q$ and an order $S$ in $Q$ is said to be {\it equivalent} to $R$ if there exist units $a, b, c, d \in Q$ such that $aRb \subseteq S$ and $cSd \subseteq R$.
And $R$ is called a {\it maximal order} if it is maximal within its equivalence class, that is, if $S$ is an order in $Q$ equivalent to $R$ and containing $R$, then $S$ must be equal to $R$.

The following result is due to \cite[Theorem 5.3.13 and Proposition 13.6.11]{MR2001}.

\begin{lem}\label{center-of-Maximal-order}
	Let $R$ be a prime noetherian ring which is a finitely generated module over its center $Z(R)$. If $R$ is a maximal order (in its quotient ring), then $Z(R)$ a noetherian normal domain.
\end{lem}

%



\begin{lem}\cite[Theorem 5.3.16]{MR2001}\label{hereditary-maximal-order-over-Dedekind domain}
	Let $R$ be a noetherian prime ring which is a finitely generated module over its center $Z(R)$.
	Suppose that $R$ is a maximal order.
	Then $R$ is a hereditary ring if its center $Z(R)$ is a Dedekind domain.
\end{lem}

\begin{lem}\label{Kd-Coker-gamma}
	Let $R$ be a prime noetherian algebra which is a finitely generated module over its center $Z(R)$.
	Let $H$ be a semisimple Hopf algebra acting on $R$ inner faithfully. Suppose that $H = \kk G$ or $(\kk G)^*$ where $G$ is a finite group.
	If $R$ is a maximal order, then $\Kdim_R \Coker \gamma \leq \Kdim R - 2$.
\end{lem}
\begin{proof}
	Let $A = R^H$, $Z = Z(R) \cap A$ and $\p$ be a height one prime ideal of $Z$.
	Since $R$ is a maximal order over its center $Z(R)$, it follows that $Z(R)$ is a noetherian normal domain by Lemma \ref{center-of-Maximal-order}.
	Since $Z$ is a subring of $Z(R)$ such that $Z(R)$ is a finitely generated $Z$-module by Lemma \ref{Z(R)-cap-R^H}, the localization $Z(R)_{\p} := T^{-1}Z(R)$ of $Z(R)$ is a Dedekind domain, where $T = Z \setminus \p$.
	Let $R_{\p} := T^{-1}R$. Since $R_{\p}$ is also a maximal order by \cite[Theorem 11.1]{Rei2003}, $\gld R_{\p} \leq 1$ by Lemma \ref{hereditary-maximal-order-over-Dedekind domain}.
	
	According to \cite[Theorem 1.1]{Liu2005}, $\gld R_{\p}\#H = \gld R_{\p} \leq 1$.
	Since
	$$A_{\p} = T^{-1} A = (1\#t)(R_{\p} \# H)(1\#t),$$
	the global dimension of $A_{\p}$ is no more than one by \cite[Proposition 7.8.9]{MR2001}. Since $R_{\p}$ is a reflexive module over $A_{\p}$, $R_{\p}$ is a finitely generated projective $A_{\p}$-module by \cite[Corollary 2.13]{QWZ2019}.
	Notice that the localization map $(\gamma_R)_{\p} = T^{-1}\gamma_R$ of $\gamma_R$ can be seen canonically as
	$$\gamma_{R_{\p}}: R_{\p} \otimes_{A_{\p}} \Hom_{A_{\p}}(R_{\p}, A_{\p}) \To \End_{A_{\p}}(R_{\p}), \qquad x \otimes f \mapsto \big(y \mapsto xf(y) \big).$$
	So $(\gamma_R)_{\p}$ is bijective for any $\p \in \Spec Z$ with $\mathrm{ht} \p \leq 1$.
	It follows that
	$$\mathrm{Kdim}_{Z} \Coker \gamma \leq \mathrm{Kdim} Z - 2.$$
	By \cite[Corollary 6.4.13]{MR2001}, we know that
	$$\mathrm{Kdim}_{R} \Coker \gamma = \mathrm{Kdim}_{Z} \Coker \gamma, \; \mathrm{Kdim} Z = \mathrm{Kdim} R.$$
	This completes the proof.
\end{proof}


The CM and Auslander properties of the AS regular algebras have been studied in \cite{ASZ1998, SZ1994, YZ1999, Zha1997}.

\begin{thm}\cite[Theorem 1.1 and Corollary 1.2]{SZ1994}\label{Staf-Zhang-thm}
	Let $R$ be a noetherian connected graded PI ring. If $R$ has finite injective dimension $d$, then $R$ is Auslander-Gorenstein and CM with $\GKdim R = \Kdim R = d$. If $R$ has finite global dimension, then 
	
	(1) $R$ is a domain and a maximal order in its quotient division ring,
	
	(2) $R$ is finitely generated as a module over its center $Z(R)$.
\end{thm}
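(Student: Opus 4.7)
The plan is to use the PI hypothesis to locate a commutative central subring over which $R$ is module-finite, and then transfer homological properties across this finite extension. First, I would prove the dimension equalities $\GKdim R = \Kdim R = d$. The PI condition combined with connected gradedness forces the existence of a graded polynomial subring $C \subseteq Z(R)$ over which $R$ is a finitely generated module---this is the graded version of Schelter's theorem and also supplies part~(2). Standard comparisons for finite extensions equate both $\GKdim R$ and $\Kdim R$ with $\dim C$, and the finite injective dimension hypothesis pins this common value to $d$ via local duality for the graded augmentation ideal.

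Second, I would deduce the Cohen-Macaulay and Auslander-Gorenstein statements by combining the corresponding (classical) facts for the regular ring $C$ with a change-of-rings spectral sequence relating $\Ext_R^*(M, R)$ to $\Ext_C^*(M, C)$. Because $C$ is a polynomial ring, the grade estimates coming from commutative Cohen-Macaulay theory are immediate, and the finiteness of $R$ over $C$ allows one to pull these estimates back to $R$ to obtain $j_R(M) + \GKdim_R M = d$. The Auslander condition on $R$ follows by applying the same reduction submodule-by-submodule and invoking the Auslander condition on $C$, which holds trivially since $C$ is regular.

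Third, under finite global dimension the conclusions in~(1) are extracted in two stages. The graded assumption together with connectedness upgrades semiprimeness (standard for noetherian connected graded rings of finite global dimension) to primeness, since a nontrivial graded central idempotent would have to sit in $R_0 = \kk$. That $R$ is actually a domain, rather than merely prime, uses the graded structure together with an analysis of the Hilbert series of principal homogeneous ideals. The maximal-order property is then the classical consequence of Auslander-regularity for a prime PI ring that is module-finite over its center: such a ring is a maximal order as soon as all its height-one localizations at primes of the center are hereditary, which is automatic in the Auslander-regular case.

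The main obstacle is the passage from primeness to the domain property and to the maximal-order statement. Primeness alone is compatible with homogeneous zero divisors in graded rings, and ruling them out genuinely requires the rigidity of the balanced dualizing complex under the Auslander-regular hypothesis together with a careful Hilbert series argument that uses both PI-ness and connectedness. The passage through the central subring is also delicate, because the finite extension $C \hookrightarrow R$ need not be projective, so one cannot naively quote commutative duality and must instead work with the balanced dualizing complex directly.
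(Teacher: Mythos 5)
This statement is imported by the paper from [SZ1994] without proof, so there is no in-paper argument to compare against; judged on its own terms, your sketch has two gaps that are not repairable by routine work. First, your foundation --- that connected graded plus noetherian plus PI already yields a graded polynomial subring $C \subseteq Z(R)$ with $R$ module-finite over $C$ --- is not an available input. Schelter's theorem concerns \emph{prime} affine PI rings and gives integrality over (or module-finiteness of) the trace ring, not module-finiteness of $R$ over a central subring; primeness is itself one of the conclusions to be proved here, and module-finiteness over the center is precisely conclusion (2), which Stafford and Zhang obtain only under the finite global dimension hypothesis. So your first two steps (the dimension equalities, the Cohen--Macaulay property, and the Auslander condition, all derived by change of rings along $C \hookrightarrow R$) rest on an assumption that is in effect a strengthening of conclusion (2), making the argument circular for the part of the theorem that assumes only finite injective dimension.

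Second, the passage to the domain and maximal-order properties is where the real content of Corollary 1.2 lies, and the sketch does not engage with it. The absence of nontrivial idempotents in a connected graded ring rules out a direct product decomposition but does not upgrade semiprime to prime; and ``prime implies domain via an analysis of Hilbert series of principal homogeneous ideals'' is not an argument --- whether a noetherian connected graded algebra of finite global dimension must be a domain is open in general, and in the PI case the known proof first establishes that $R$ is Auslander-regular, Cohen--Macaulay and AS-regular and then invokes Levasseur's theorem that such algebras are domains, with the maximal-order statement extracted from the grade and reflexivity machinery for Auslander--Gorenstein rings rather than from hereditariness of height-one localizations alone. In short, the logical order of [SZ1994] is the reverse of yours: the homological statements of Theorem 1.1 are proved first, by methods internal to the graded PI theory, and conclusions (1) and (2) are then deduced from them; your sketch attempts to deduce the homological statements from (2).
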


As well known, the smash product $B(:= R\#H)$ is a Frobenius extension of $R$, that is, $B$ is a finitely generated projective $R$-module and $B \cong \Hom_R(B, R)$ as $R$-$B$-bimodules.
\begin{lem}\label{Frob-Aus-Goren}\cite[Section 5.4]{AB2007}
	Let $R \subseteq B$ be a Frobenius extension. Then $j_R(M) = j_B(M)$ for any finitely generated $B$-module $M$. If $R$ is noetherian Auslander Gorenstein, then so is $B$.
\end{lem}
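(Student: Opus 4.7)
The plan is to transfer Ext computations from $B$ to $R$ via the Frobenius isomorphism $B \cong \Hom_R(B, R)$, and then to deduce the finite injective dimension and the Auslander condition for $B$ from the corresponding properties of $R$.

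For the grade equality, let $M$ be a finitely generated left $B$-module. The Hom-tensor adjunction, combined with the Frobenius isomorphism, yields a natural isomorphism
$$\Hom_B(M, B) \;\cong\; \Hom_B\bigl(M, \Hom_R(B, R)\bigr) \;\cong\; \Hom_R(M, R).$$
Since $B$ is finitely generated projective as an $R$-module, $\Hom_R(B, -)$ is exact and preserves injectives, so deriving the composite yields $\Ext^i_B(M, B) \cong \Ext^i_R(M, R)$ for all $i \geq 0$. Comparing the smallest index at which either side is nonzero gives $j_B(M) = j_R(M)$. Analogous reasoning handles right modules using the symmetric Frobenius isomorphism $B \cong \Hom_{R^{\op}}(B, R)$.

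For the finiteness of injective dimension, take a finite injective resolution $R \to I^\bullet$ of $R$ over $R$. Since $B$ is $R$-projective, $\Hom_R(B, -)$ is exact and preserves injectives, so $B \cong \Hom_R(B, R) \to \Hom_R(B, I^\bullet)$ is a finite injective resolution of $B$ as a $B$-module of the same length. The same reasoning on the other side gives finiteness of the right injective dimension. For the Auslander condition on $B$: given a finitely generated $B$-module $M$ and any $B$-submodule $N \subseteq \Ext^i_B(M, B)$, the isomorphism above identifies $N$ with an $R$-submodule of $\Ext^i_R(M, R)$; this identification is meaningful because $M$ is finitely generated over $R$ (as $B$ itself is finitely generated over $R$). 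The Auslander condition on $R$ then gives $j_R(N) \geq i$, and the grade equality promotes this to $j_B(N) \geq i$.

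The only real technical point is arranging the bimodule bookkeeping so that the adjunction isomorphism is genuinely $B$-linear rather than merely $R$-linear, and so that the correct one-sided projectivity of $B$ over $R$ is used when passing to derived functors. Once that side-and-structure bookkeeping is in place, the three assertions of the lemma follow formally from the two bullet points above.
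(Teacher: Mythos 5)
Your argument is correct: the adjunction $\Hom_B(M,\Hom_R(B,R))\cong\Hom_R(M,R)$ combined with the Frobenius isomorphism and the projectivity of $B$ over $R$ gives $\Ext^i_B(M,B)\cong\Ext^i_R(M,R)$, from which the grade equality, the bound on the injective dimension of $B$, and the Auslander condition for $B$ all follow as you describe; this is the standard proof. The paper itself offers no proof, citing \cite[Section 5.4]{AB2007}, and your write-up is essentially the argument found there (the only points worth making explicit are that $B$ is noetherian because it is module-finite over the noetherian ring $R$, and that the comparison isomorphism is right $R$-linear so that a $B$-submodule of $\Ext^i_B(M,B)$ is carried to an $R$-submodule of $\Ext^i_R(M,R)$).
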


\begin{thm}\label{Auslander-thm-for-PI-rings}
	Let $R$ be a noetherian AS regular PI algebra, and $H$ be a semisimple Hopf algebra acting on $R$ homogeneously and inner faithfully. Suppose that $H = \kk G$ or $(\kk G)^*$ where $G$ is a finite group. 
	If the homological determinant of the $H$-action on $R$ is trivial, then the Auslander map $\varphi: R\#H \to \End_{R^H}(R)$ is bijective.
\end{thm}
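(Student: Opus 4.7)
The strategy is to apply Theorem \ref{main-thm-0} to $B := R\#H$ with the idempotent $e := 1\#t$, by checking both the hypothesis that $B$ is noetherian Auslander Gorenstein and the three conditions (a), (b), (c) of Proposition \ref{main-prop-0}. The crucial starting input is Theorem \ref{Staf-Zhang-thm}: since $R$ is a connected graded noetherian PI ring of finite global dimension, $R$ is simultaneously Auslander Gorenstein, Cohen-Macaulay with $\GKdim R = \Kdim R$, a domain, a maximal order in its quotient division ring, and module-finite over its center. Because $R \subset B$ is a Frobenius extension, Lemma \ref{Frob-Aus-Goren} then transfers the Auslander Gorenstein property from $R$ to $B$ and lets us identify grades of finitely generated $B$-modules computed over $R$ and over $B$.

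Conditions (a) and (b) are essentially handed over by earlier lemmas. For (a): $R$ is a domain by Theorem \ref{Staf-Zhang-thm}, and the hypothesis $H \in \{\kk G, (\kk G)^*\}$ places us directly in the setting of Proposition \ref{Auslander-map-injective}, which gives the injectivity of $\varphi$. For (b): triviality of the homological determinant combined with Lemma \ref{nu-bijective} gives the bijectivity of $\nu$.

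The heart of the argument, and the main obstacle, is condition (c): $j_B(\Coker \gamma) \geq 2$. I would handle this by passing back and forth between grade and Krull dimension. All hypotheses of Lemma \ref{GKd-Coker-gamma} hold for $R$ (prime noetherian, module-finite over its center, maximal order, with an inner faithful action of $H = \kk G$ or $(\kk G)^*$), so
\[
\Kdim_R \Coker \gamma \;\leq\; \Kdim R - 2.
\]
The Cohen-Macaulay property of $R$ with respect to Krull dimension (Theorem \ref{Staf-Zhang-thm}) rearranges this into the grade bound $j_R(\Coker \gamma) \geq 2$, and the Frobenius extension identity $j_B(\Coker \gamma) = j_R(\Coker \gamma)$ from Lemma \ref{Frob-Aus-Goren} then delivers condition (c). Theorem \ref{main-thm-0} completes the proof. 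All of the genuinely technical arithmetic is already packaged in Lemma \ref{GKd-Coker-gamma} via localization at height-one primes of $Z(R) \cap R^H$ and reduction to Dedekind prime rings; the present theorem merely assembles these ingredients with the general bijectivity criterion of Theorem \ref{main-thm-0}.
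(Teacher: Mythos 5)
Your proposal is correct and follows essentially the same route as the paper's own proof: verifying conditions (a), (b), (c) via Proposition \ref{Auslander-map-injective}, Lemma \ref{nu-bijective}, and the chain Lemma \ref{GKd-Coker-gamma} $\Rightarrow$ Cohen--Macaulay $\Rightarrow$ Lemma \ref{Frob-Aus-Goren}, then concluding with Theorem \ref{main-thm-0}. If anything, you are slightly more explicit than the paper in noting that the Auslander Gorenstein hypothesis on $B = R\#H$ itself must be checked via the Frobenius extension, which is a point the paper leaves implicit.
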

\begin{proof}
	In order to apply Theorem \ref{main-thm-0} we need to verify the conditions (a), (b) and (c) in Proposition \ref{main-prop-0}.
	
	(a) It follows from Corollary \ref{Auslander-map-injective}.
	
	(b) It follows from Lemma \ref{nu-bijective}.

	(c) Since $R$ is a maximal order which is a finitely generated module over its center by Theorem \ref{Staf-Zhang-thm}, it follows that $\Kdim_R \Coker \gamma \leq \Kdim R - 2$ by Lemma \ref{Kd-Coker-gamma}. Hence $j_R(\Coker \gamma) \geq 2$ because $R$ is CM. 
	Therefore, $j_{R\#H}(\Coker \gamma) = j_R(\Coker \gamma) \geq 2$ by Lemma \ref{Frob-Aus-Goren}.
	
	Then the conclusion follows from Theorem \ref{main-thm-0}.
\end{proof}


We now give a few examples.

\begin{ex}
	Assume that $R$ is the $(-1)$-skew polynomial ring
	$$\kk_{-1}[x_1, \dots, x_d] := \kk\langle x_1, \dots, x_d \rangle / (x_ix_j+x_jx_i \mid i < j).$$
	Gaddis, Kirkman, Moore and Won proved that the Auslander map is an isomorphism for any finite subgroup of the symmetric group $\mathfrak{S}_d$ \cite[Theorem 3]{GKMW2019}.
	As we know, the automorphism group of $R$ is $(\kk^{\times})^d \rtimes \mathfrak{S}_d$ and $\hdet: \GrAut(R) \to \kk^{\times}$ is given by $\hdet(\lambda_1, \dots, \lambda_d; \sigma) = \prod_{i=1}^{d}\lambda_i$ (see \cite[Lemma 1.12]{KKZ2014}). Hence
	$$\SL(R) = \{ (\lambda_1, \dots, \lambda_d; \sigma) \mid \prod_{i=1}^{d}\lambda_i = 1, \lambda_i \in \kk, \sigma \in \mathfrak{S}_d \} \supseteq \mathfrak{S}_d,$$
	and the Auslander map is  an isomorphism for any finite subgroup of $\SL(R)$ by Theorem \ref{Auslander-thm-for-PI-rings}.
\end{ex}

\begin{ex}
	For any $\alpha \in \kk$ and $\beta \in \kk^{\times}$, the algebra $A(\alpha, \beta)$ over $\kk$ with generators $d, u$ and the defining relations
	$$d^2u = \alpha dud + \beta ud^2, \qquad du^2 = \alpha udu + \beta u^2d$$
	is a down-up algebra. 
	Bao, He and Zhang proved that the Auslander map is an isomorphism for any finite group of graded automorphisms when $\beta \neq -1$, and also for the case $A(2, -1)$ \cite[Theorem 0.6]{BHZ2018}.
	Let $\Delta = \sqrt{\alpha^2 + 4 \beta}$, $\omega_1 = \frac{\alpha - \Delta}{2}$, and $\omega_2 = \frac{\alpha + \Delta}{2}$.
	Then $A(\alpha, \beta)$ is a PI algebra if and only if $\Delta \neq 0$, and $\omega_1, \omega_2$ are roots of unity \cite[Theorem 1.3]{Zha1999}. 
	According to \cite[Proposition 1.1]{KK2005}, the graded automorphism group of $A(\alpha, -1)$ is
	$$\left\{ \begin{pmatrix} a_{11} & 0 \\ 0 & a_{22} \end{pmatrix}, \begin{pmatrix} 0 & a_{12} \\ a_{21} & 0 \end{pmatrix} \bigg| a_{ij} \in \kk^{\times} \right\}.$$
%
%
%
	By some computations, we can see that
	$$\SL(A(\alpha, -1)) = \left\{ \begin{pmatrix} a & 0 \\ 0 & \pm a^{-1} \end{pmatrix}, \begin{pmatrix} 0 & a \\ \pm a^{-1} & 0 \end{pmatrix} \bigg| a \in \kk^{\times} \right\}.$$
	
	Now assume that $\omega (\neq \pm 1)$ is a root of unity. Then $A(\omega + \omega^{-1}, -1)$ is a PI AS regular algebra. 
	By Theorem \ref{Auslander-thm-for-PI-rings}, we can see the Auslander map is an isomorphism for any finite subgroup of $\SL(A(\alpha, \beta))$ when $\alpha = \omega + \omega^{-1}$ and $\beta = -1$.
\end{ex}

\section{Applications to noncommutative resolutions}

In this section, some applications to noncommutative crepant (or quasi-) resolutions are indicated.
Let's recall the definitions of noncommutative resolutions.

\begin{defn}\cite{VdBer2004}\label{NCCR-defn}
	Let $A$ be a noetherian Gorenstein normal domain. A {\it noncommutative crepant resolution} (or {\it NCCR} for short) of $A$ is an algebra $\End_A(M)$ where $M$ is a reflexive $A$-module and where $\End_A(M)$ has finite global dimension and is a maximal CM $A$-module.
\end{defn}

	Let $n$ be a nonnegative integer, $A$ and $B$ be two $\N$-graded algebras.
	Two $\Z$-graded $B$-$A$-bimodules $X$, $Y$ are called $n$-isomorphic, denoted by $X \cong_{n} Y$, if there exists a $\Z$-graded $B$-$A$-bimodules $P$ and $\Z$-graded bimodule morphisms $f: X \to P$ and $g: Y \to P$ such that both the kernel and cokernel of $f$ and $g$ have GK-dimension no more than $n$.

\begin{defn}\cite[Definition 0.5]{QWZ2019}
	Let $A$ be a noetherian $\N$-graded algebra with $\GKdim(A) = d (\geq 2) \in \N$. If there exists a noetherian locally finite $\N$-graded Auslander regular CM algebra $B$ with $\GKdim(B) = d$ and two $\Z$-graded bimodules $_BM_A$ and $_AN_B$, finitely generated on both sides, such that
	$$M \otimes_A N \cong_{d-2} B, \text{ and } N \otimes_B M \cong_{d-2} A$$
	as $\Z$-graded bimodules, then the triple $(B, M, N)$ or simply the algebra $B$ is called a {\it noncommutative quasi-resolution} (or {\it NQR} for short) of $A$.
\end{defn}

Many examples of NQRs are produced by the Auslander theorem.

\begin{thm}\cite[Proposition 8.3 and Example 8.5]{QWZ2019}\label{NQR-thm}
	Let $R$ be a noetherian connected graded Auslander regular CM algebra with $\GKdim(R) = d \geq 2$, and $H$ be a semisimple Hopf algebra acting on $R$ homogeneously and inner faithfully with integral $t$ such that $\vep(t) = 1$. If the Auslander map is bijective, then $(B, Be, eB)$ is a NQR of $R^H$, where $B = R\#H$ and $e =1\#t$.
\end{thm}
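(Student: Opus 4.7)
The plan is to verify, one by one, the three conditions in the definition of a noncommutative quasi-resolution for the triple $(B, Be, eB)$ over $A := R^H$: first, that $B = R\#H$ is noetherian, locally finite, $\N$-graded, Auslander regular and Cohen-Macaulay of GK-dimension $d$; second, that $M := Be$ and $N := eB$ are finitely generated on both sides as $\Z$-graded bimodules; third, the two approximate isomorphisms $M \otimes_A N \cong_{d-2} B$ and $N \otimes_B M \cong_{d-2} A$.

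The structural properties of $B$ will come from the Frobenius extension $R \subseteq B$. Lemma \ref{Frob-Aus-Goren} gives that $B$ is noetherian Auslander Gorenstein, and the identity $j_B = j_R$ of that lemma, combined with $B$ being a free $R$-module of rank $\dim_{\kk} H$ (so $\GKdim_B(X) = \GKdim_R(X)$ on finitely generated modules and $\GKdim(B) = d$), transfers the Cohen-Macaulay property from $R$ to $B$. Semisimplicity of $H$ forces $\gld(B) = \gld(R) < \infty$ by standard results on smash product extensions, so $B$ is Auslander regular. Connected-gradedness, noetherianness and local finiteness pass from $R$ and the finite-dimensional $H$ to $B$ automatically.

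The bimodule finiteness is essentially trivial: $Be$ and $eB$ are cyclic $B$-modules, while the isomorphism $\lambda$ of Section 1.2 identifies them with $R$ as right (resp.\ left) $A$-modules, and $R$ is finitely generated over $A$ by Lemma \ref{invariant-subring-lemma}. The second tensor relation is already an on-the-nose isomorphism,
$$N \otimes_B M = eB \otimes_B Be = e(B \otimes_B Be) = eBe \cong A,$$
so there is nothing to estimate there.

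The main step is the first tensor relation, via the multiplication map $\mu : Be \otimes_A eB \to B$. Its cokernel is $B/BeB$, and bijectivity of the Auslander map $\varphi$ gives $j_B(B/BeB) \geq 2$ by Proposition \ref{grade-Auslander-map}(2); the Cohen-Macaulay property of $B$ then promotes this to $\GKdim(B/BeB) \leq d - 2$. For the kernel, Proposition \ref{main-prop-0}(b) makes $\nu$ bijective, so in diagram \eqref{com-dig} both vertical arrows $\id \otimes \nu$ and $\varphi$ are isomorphisms; the identity $\varphi \circ \mu = \gamma \circ (\id \otimes \nu)$ then yields $\Ker(\mu) \cong \Ker(\gamma)$. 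The expected main obstacle is the bound $\GKdim \Ker(\gamma) \leq d - 2$: I would obtain it by a localization argument in the spirit of Lemma \ref{GKd-Coker-gamma}, namely that after localizing at any height-one prime of the center of $A$, the module $R$ becomes a finitely generated projective $R^H$-module, so the localized $\gamma$ is an isomorphism and therefore $\Ker(\gamma)$ is supported in codimension at least two, hence of GK-dimension at most $d - 2$.
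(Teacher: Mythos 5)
The paper does not actually prove this theorem --- it imports it wholesale from \cite{QWZ2019} --- so there is no internal proof to compare against; I will judge your argument on its own terms. The architecture you chose is the natural one, and most of it is sound: the transfer of the Auslander, Gorenstein/regular, Cohen--Macaulay and GK-dimension properties from $R$ to $B$ along the Frobenius extension, the on-the-nose isomorphism $eB \otimes_B Be \cong eBe = A$, and the bound $\GKdim(B/BeB) \leq d-2$ for $\Coker \mu$ obtained from Proposition \ref{grade-Auslander-map}(2) together with the CM property of $B$ are all correct. (One cosmetic point: $B = R\#H$ is locally finite $\N$-graded but not connected graded; the NQR definition only asks for the former, so no harm done.)

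The genuine gap is the final step, the bound $\GKdim \Ker\mu \leq d-2$ (equivalently $\GKdim \Ker\gamma \leq d-2$). The argument you propose --- localize at height-one primes of the center and show the localized $\gamma$ is bijective --- is exactly the mechanism of Lemma \ref{GKd-Coker-gamma}, and it needs $R$ to be a finitely generated module over its center (plus a maximal-order hypothesis to make the localized center a Dedekind domain). But Theorem \ref{NQR-thm} carries no PI hypothesis: a noetherian connected graded Auslander regular CM algebra can have a center far too small for this (e.g.\ a quantum plane at a non-root of unity), so in the stated generality the localization machinery is simply unavailable. Fortunately it is also unnecessary. If $\xi = \sum_i x_i \otimes y_i \in \Ker\mu$ and $b \in B$, then each $ebx_i$ lies in $eBe = A$ and can be moved across the tensor sign, so
$$(eb)\cdot \xi \;=\; \sum_i (ebx_i) \otimes y_i \;=\; e \otimes \Big( eb \sum_i x_i y_i \Big) \;=\; 0,$$
i.e.\ $BeB$ annihilates $\Ker\mu$ on the left. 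Since $Be \otimes_A eB$ is a finitely generated left $B$-module and $B$ is noetherian, $\Ker\mu$ is a finitely generated left $B/BeB$-module, whence $\GKdim \Ker\mu \leq \GKdim(B/BeB) \leq d-2$. Replacing your localization step by this annihilation argument closes the gap and makes the proof work in the full generality of the statement.
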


A connected graded AS regular algebra $R$ is called {\it Calabi-Yau} if the Nakayama automorphism of $R$ is the identity map.

\begin{lem}\cite{WZ2022}\label{normal-hdet}
	Let $R$ be a noetherian connected graded Calabi-Yau algebra.
	Suppose that there exists a normal regular element $z \in R$ and $\sigma \in \Aut(R)$ such that $zx = \sigma(x)z$ for all $x \in R$.
	Then $\hdet(\sigma) = 1$.
\end{lem}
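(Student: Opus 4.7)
The plan is to characterize $\hdet(\sigma)$ as the scalar by which $\sigma$ acts on the one-dimensional top Ext group $\Ext^d_R(\kk, R)$, and then use the Calabi-Yau hypothesis together with the bimodule identity forced by $z$ to show that this scalar must be $1$.

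The first step is to translate the hypothesis $zx = \sigma(x)z$ into a statement about graded $R$-bimodules. Writing $n = \deg z$, left multiplication by $z$ is right $R$-linear, and one checks that for the left action one has $x \cdot (zr) = (xz)r = z(\sigma^{-1}(x)r)$, so $z \cdot : R \to R$ gives a graded $R$-bimodule monomorphism ${}^{\sigma}R(-n) \hookrightarrow R$ with image the two-sided ideal $zR = Rz$. This yields a short exact sequence of graded $R$-bimodules
$$0 \To {}^{\sigma}R(-n) \xrightarrow{\;z\cdot\;} R \To R/(z) \To 0.$$

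The second step is to use the Calabi-Yau hypothesis. Since $R$ is noetherian connected graded Calabi-Yau of global dimension $d$, the Nakayama automorphism of $R$ is the identity, and the balanced dualizing complex is $\Omega(R) \cong R(-l)[d]$ as a graded $R$-bimodule, where $l$ is the AS index. Correspondingly, $\Ext^d_R(\kk, R) \cong \kk(l)$ as a bimodule with both left and right $R$-actions factoring through the augmentation. By definition of the homological determinant, $\hdet(\sigma)^{-1}$ is the scalar by which $\sigma$ acts on this one-dimensional space.

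The third step is the comparison. Applying $\RHom_R(\kk, -)$ to the short exact sequence above and examining the top cohomology, the $\sigma$-twist on the left-hand bimodule propagates to a $\sigma$-twist on $\Ext^d_R(\kk, {}^{\sigma}R(-n)) \cong {}^{\sigma}\kk(l-n)$. However, since $\kk$ is one-dimensional with trivial left action, $^{\sigma}\kk = \kk$ canonically, and the connecting bimodule map in the long exact sequence must be compatible with the natural $\sigma$-equivariant structures on both sides. Tracing through the identifications (using that the dualizing complex of $R$ is itself $R$ up to shift), the induced action of $\sigma$ on $\Ext^d_R(\kk, R)$ is identified with the action induced by conjugation by $z$, which is trivial. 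Hence $\hdet(\sigma)^{-1} = 1$.

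The main obstacle is keeping careful track of the bimodule twists through the derived functors and ensuring the correct identification of the induced scalar action. The Calabi-Yau hypothesis is essential: without $\mu_R = \id$, the left action on $\Ext^d_R(\kk, R)$ would itself be twisted by $\mu_R$, and the cancellation with the $\sigma$-twist coming from the existence of $z$ would not occur so cleanly. In effect, the lemma says that any graded automorphism of a Calabi-Yau algebra that is ``inner'' in the sense of being implemented by conjugation with a normal regular homogeneous element has trivial homological determinant.
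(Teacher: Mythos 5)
The paper does not prove this lemma; it is quoted from \cite{WZ2022}, so there is no in-paper argument to compare against and your proposal has to stand on its own. Your steps 1 and 2 are fine in substance (modulo the direction of the twist: since $x(zr)=z(\sigma^{-1}(x)r)$, left multiplication by $z$ gives a bimodule map ${}^{\sigma^{-1}}R(-n)\to R$, not ${}^{\sigma}R(-n)\to R$; and it is worth recording that $\sigma(z)=z$, which follows from $zz=\sigma(z)z$ and regularity). The problem is step 3, which is where the entire content of the lemma lives, and the mechanism you propose there provably cannot work. The map $\Ext^d_R(\kk,{}^{\sigma^{-1}}R(-n))\to\Ext^d_R(\kk,R)$ induced by $\lambda_z$ in your long exact sequence is a degree-zero map from $\kk(l-n)$ (concentrated in degree $n-l$) to $\kk(l)$ (concentrated in degree $-l$); since $n=\deg z\geq 1$ (if $z\in R_0=\kk$ then $\sigma=\id$ and there is nothing to prove), this map is \emph{zero} for degree reasons. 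The long exact sequence therefore only computes $\Ext^\bullet_R(\kk,R/(z))$ and carries no information about the scalar by which $\sigma$ acts on $\Ext^d_R(\kk,R)$. Likewise, the phrase ``the action induced by conjugation by $z$, which is trivial'' is not meaningful as stated: $z$ is not a unit, conjugation by $z$ is not an inner automorphism of $R$, and neither $\lambda_z$ nor $\rho_z$ individually induces an automorphism of the one-dimensional space $\Ext^d_R(\kk,R)$ that one could compare with the functorial $\sigma$-action.

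A second symptom of the gap is that the Calabi--Yau hypothesis never does any real work in your argument, whereas it is essential: for $R=\kk_{-1}[x,y]$ (AS regular but not Calabi--Yau, as $\mu_R=-\id$) the element $z=x$ is normal regular with $\sigma=\mathrm{diag}(1,-1)$, and $\hdet(\sigma)=-1$. Also note that $\Ext^d_R(\kk,R)$ carries only a \emph{right} $R$-module structure, so your remark that ``the left action on $\Ext^d_R(\kk,R)$ would be twisted by $\mu_R$'' conflates this group with the local cohomology module $H^d_{\m}(R)$, where the Nakayama twist genuinely lives. Any honest proof has to work with an object on which multiplication by $z$ acts nontrivially --- e.g.\ the full module $H^d_{\m}(R)\cong{}^{\mu}R'(l)$, or the equivariant trace series $\Tr_R(\sigma,t)$ together with the relation $(1-t^{n})\Tr_R(\sigma,t)=\Tr_{R/(z)}(\bar\sigma,t)$ and the Jorgensen--Zhang expansion of traces at $t=\infty$, or the homological identity relating $\mu_{R/(z)}$ to $\mu_R\circ\sigma$ --- and it is exactly there that $\mu_R=\id$ enters. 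As written, your step 3 asserts the conclusion rather than deriving it.
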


By abuse of notation, the smash product of $R$ by a group algebra $\kk G$ is denoted by $R\#G$.

\begin{thm}\label{NCCR-thm}
	Let $R$ be a noetherian AS regular PI algebra.
	\begin{enumerate}
		\item Let $G$ be a finite subgroup of $\SL(R)$. Then $R\#G$ is a NQR of $R^G$. If further, $R^G$ is commutative, then $R\#G$ is a NCCR of $R^G$.
		\item Let $Z$ be the center of $R$. Suppose that $R$ is Calabi-Yau. 
		\begin{itemize}
			\item[(i)] Then $\GrAut_{Z}(R)$ is a finite subgroup of $\SL(R)$.
			\item[(ii)] If $R^{\GrAut_{Z}(R)} = Z$, then $\End_Z(R)$ is a NCCR of $Z$.
		\end{itemize}
	\end{enumerate}
\end{thm}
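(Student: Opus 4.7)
The plan is to feed the assumptions into Theorem \ref{Auslander-thm-for-PI-rings} and Theorem \ref{NQR-thm} to obtain noncommutative quasi-resolutions, and then promote these to NCCRs in the commutative-base cases by checking Definition \ref{NCCR-defn} directly.

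For part (1), since $G \subseteq \SL(R)$ the homological determinant of the $\kk G$-action is trivial, and Theorem \ref{Auslander-thm-for-PI-rings} delivers the isomorphism $R\#G \cong \End_{R^G}(R)$. Theorem \ref{Staf-Zhang-thm} supplies the Auslander regular Cohen--Macaulay hypotheses needed to invoke Theorem \ref{NQR-thm}, which then gives the NQR assertion. Assume now $R^G$ is commutative. Then $R^G$ is a noetherian Gorenstein normal domain (Gorenstein by Lemma \ref{dualizing complex}, a domain as a subring of $R$, normal by the standard argument that commutative invariants of a maximal order are integrally closed in their field of fractions). Since $R\#G$ is free of rank $|G|$ as a right $R$-module and $R$ is maximal Cohen--Macaulay over $R^G$, the module $\End_{R^G}(R) \cong R\#G$ is maximal CM over $R^G$; its finite global dimension follows from $\gld(R\#G) = \gld R < \infty$ by the result of Liu cited in the proof of Lemma \ref{GKd-Coker-gamma}; and $R$ is reflexive over $R^G$ as a maximal CM module over a commutative Gorenstein ring.

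For part (2)(i), since $R$ is a finitely generated $Z$-module (Theorem \ref{Staf-Zhang-thm}), any $\phi \in \GrAut_Z(R)$ is determined by its values on finitely many homogeneous $Z$-module generators $r_1, \dots, r_n$ of degrees $d_i$, embedding $\GrAut_Z(R)$ into the finite-dimensional affine variety $\bigoplus_i R_{d_i}$. A PI-theoretic argument then forces this algebraic group to be zero-dimensional, hence finite: after localising at the generic point of $Z$, Skolem--Noether realises $\phi$ as conjugation by a unit of $Q(R)$, and the connected graded and maximal-order hypotheses leave only finitely many such units modulo central scalars. For the inclusion $\GrAut_Z(R) \subseteq \SL(R)$, given $\phi \in \GrAut_Z(R)$ choose $u \in Q(R)^\times$ with $\phi = u(-)u^{-1}$ on $Q(R)$ via Skolem--Noether; clearing denominators by a homogeneous element of $Z$ and invoking that $R$ is a maximal order, one extracts a homogeneous normal regular element $z \in R$ with $zx = \phi(x)z$ for all $x \in R$. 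Since $R$ is Calabi--Yau, Lemma \ref{normal-hdet} then gives $\hdet(\phi) = 1$.

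For part (2)(ii), set $G = \GrAut_Z(R)$; by part (2)(i), $G$ is a finite subgroup of $\SL(R)$ with $R^G = Z$, so Theorem \ref{Auslander-thm-for-PI-rings} yields $R\#G \cong \End_Z(R)$. The NCCR axioms for $\End_Z(R)$ over $Z$ are verified as in part (1): $Z$ is a Gorenstein (Lemma \ref{dualizing complex} applied to $Z = R^G$) normal noetherian domain (normal as the center of the PI maximal order $R$); $\End_Z(R) \cong R\#G$ has finite global dimension and is maximal CM over $Z$; and $R$ is reflexive over $Z$. Hence $\End_Z(R)$ is an NCCR of $Z$. The main obstacle is part (2)(i): producing, for each $\phi \in \GrAut_Z(R)$, a homogeneous normal regular element $z \in R$ implementing $\phi$ as conjugation, which is what makes Lemma \ref{normal-hdet} and the Calabi--Yau hypothesis applicable; the finiteness of $\GrAut_Z(R)$ rests on the same circle of PI and maximal-order ideas. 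Once (2)(i) is in hand, parts (1) and (2)(ii) amount to routine application of the previously established theorems.
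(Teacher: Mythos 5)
Your parts (1) and (2)(ii) follow the paper's route: trivial homological determinant from $G \subseteq \SL(R)$, Theorem \ref{Auslander-thm-for-PI-rings} plus Theorem \ref{NQR-thm} for the NQR claim, and then a direct check of Definition \ref{NCCR-defn} (Gorenstein normal domain base, finite global dimension of $R\#G$ via Liu's theorem, maximal CM and reflexivity of $R$). The inclusion $\GrAut_Z(R) \subseteq \SL(R)$ in (2)(i) is also the paper's argument: Skolem--Noether over $K = \Frac(Z)$ realises $\sigma$ as conjugation by some $z$ which may be taken in $R$, $z$ is then normal and regular, and Lemma \ref{normal-hdet} gives $\hdet(\sigma)=1$.

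The genuine gap is the finiteness of $\GrAut_Z(R)$. You assert that ``the connected graded and maximal-order hypotheses leave only finitely many such units modulo central scalars,'' but this is exactly the statement to be proved and no argument is supplied; a priori the units of $Q = R\otimes_Z K$ that normalise $R$ and induce graded automorphisms form an infinite family modulo $K^\times$ (think of $\mathrm{PGL}_n(K)$ acting on a matrix order), and nothing you say rules out a positive-dimensional algebraic group of such automorphisms. The paper closes this by a two-step argument you would need to reproduce: first, every $\sigma \in \GrAut_Z(R)$ satisfies $\sigma^{n!} = \id_R$ where $n = \dim_K Q$ --- since each graded piece $R_m$ is finite dimensional, $\sigma$ is diagonalisable, and if $\sigma(x) = \lambda x$ then conjugating a monic integral equation of $x$ over $Z$ of degree $k \leq n$ by powers of the normalising element $z$ shows that all $\lambda^i x$ are roots of the same degree-$k$ polynomial over the domain $R$, forcing $\lambda^l = 1$ for some $l \leq k$; second, a subgroup of $\mathrm{GL}_n(K)$ of bounded exponent is finite by Burnside's theorem. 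Without the bounded-exponent step (or some substitute), your claim that the group is zero-dimensional, hence finite, does not follow.
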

\begin{proof}
	(1)
	The first assertion follows from that Theorem \ref{Auslander-thm-for-PI-rings} and \ref{NQR-thm}.
	By Lemma \ref{dualizing complex}, $R^G$ is a Gorenstein domain such that $R$ is a maximal CM $R^G$-module.
	Recall that $R\#G \cong \End_{R^G}(R)$ has finite global dimension by \cite[Theorem 1.1]{Liu2005}, and that $R^G = Z(\End_{R^G}(R))$ is normal by \cite[Lemma 2.2]{SVdB2008}.
	Since $R \cong \Hom_{R^G}(R, R^G)$ as $R^G$-modules, $R$ is a reflexive $R^G$-module.
	Thus $\End_{R^G}(R)$ is a maximal order.
	So $R\#G$ is a NCCR of $R^G$ by the definition.
	
	(2)
	(i) Let $K$ be the fraction field of $Z$. Then $Q:= R \otimes_Z K$ is a central simple $K$-algebra by Posner's theorem \cite[Theorem 13.6.5]{MR2001}. Let $\sigma \in \GrAut_{Z}(R)$. Since $\sigma$ extends to a $K$-algebra automorphism of $Q$, it follows that $\sigma$ is inner, that is, there exists a nonzero element $z \in Q$ such that $\sigma(q) = zqz^{-1}$ for all $q \in Q$. Without loss of generality, we will assume that $z \in R$. Clearly, $z$ is a normal regular element of $R$. Then by Proposition \ref{normal-hdet}, $\hdet(\sigma) = 1$. Hence $\GrAut_{Z}(R) \subseteq \SL(R)$.

For any $\sigma \in \GrAut_{Z}(R)$, we claim that $\sigma^{n!} = \id_R$ where $n = \dim_K(Q)$.
As proved above, there exists a normal regular element $z \in R$ such that $\sigma(r)z = zr$ for all $r \in R$.
Since $z$ is integral over $Z$, there exists an integer $k \leq n$ and $a_i \in Z$ such that
$$z^k + a_{k-1}z^{k-1} + \cdots + a_1z + a_0 = 0.$$
Without loss of generality, we will assume that $k$ is minimum.
Since $R$ is a noetherian connected graded ring, $R_m$ is finite dimensional over $\Bbbk$ for all $m \geq 0$.
Recall that $\Bbbk$ is an algebraically closed field of characteristic zero.
For any $m \geq 0$, if $R_m$ is not a semisimple $\Bbbk \langle \sigma \rangle$-module, then there exist $x, y \in R_m \setminus \{ 0 \}$ such that $\sigma(y) = \lambda y + x$ and $\sigma(x) = \lambda x$ for some $\lambda \in \Bbbk \setminus \{ 0 \}$.
Notice that for any $i \geq 1$,
$$0 = (z^k + a_{k-1}z^{k-1} + \cdots + a_1z + a_0)x^i = x^i((\lambda^i z)^k + a_{k-1}(\lambda^i z)^{k-1} + \cdots + a_1(\lambda^i z) + a_0).$$
Hence $\lambda^i z$ is also a solution of the equation
\begin{equation}\label{z-equation}
	X^k + a_{k-1}X^{k-1} + \cdots + a_1X + a_0 = 0.
\end{equation}
Since $Z[z]$ is commutative subring of a domain $R$, the equation \eqref{z-equation} has at most $k$ distinct roots in $Z[z]$.
Hence there exists an integer $l \leq k$ such that $\lambda^l = 1$.
Let $y' = x^{l-1}y$ and $x' = \lambda^{l-1}x^l$. Then $\sigma(y') = y' + x'$ and $\sigma(x') = x'$.
Thus we have
\begin{align*}
	0 = & (z^k + a_{k-1}z^{k-1} + \cdots + a_1z + a_0)y' \\
	= & (y' + kx')z^k + (y' + (k-1)x')a_{k-1}z^{k-1} + \cdots + (y' + x')a_1z + y'a_0 \\
	= & y' (z^k + a_{k-1}z^{k-1} + \cdots + a_1z + a_0) + x' (kz^k + (k-1)a_{k-1}z^{k-1} + \cdots + a_1z) \\
	= & x' (kz^{k-1} + (k-1)a_{k-1}z^{k-1} + \cdots + a_1) z.
\end{align*}
which is a contradiction since $R$ is a domain.
Hence $R_m$ is a semisimple $\Bbbk \langle \sigma \rangle$-module for any $m \geq 0$.
Therefore, for any $x \in R_m$, there exist $\lambda_1, \dots, \lambda_s \in \kk \setminus \{ 0 \}$ and $x_1, \dots, x_s \in R_m$ such that
$$x = x_1 + \cdots + x_s, \text{ and } \sigma(x_i) = \lambda_i x_i \text{ for all } i.$$
By the above proof, there exist positive integers $l_1, \dots, l_s \leq n$ such that $\lambda_i^{l_i} = 1$ for all $i$.
It follows that $\sigma^{n!} = \id_R$.


	Since $G = \GrAut_{Z}(R)$ is a subgroup of $\Aut_{K}(Q) (\subseteq \mathrm{GL}_n(K))$ with finite exponent, it follows that $G$ is a finite group by Burnside theorem \cite[8.1.11]{Rob1995}.
	
	(ii)
	The conclusion follows immediately from (1).
\end{proof}

According to Theorem \ref{NCCR-thm} (2) (ii), we establish a criterion for the center of a noetherian PI connected graded Calabi-Yau algebra to have a NCCR.
Now let's consider the following example.

\begin{ex}
	Let $\{ p_{ij} \in \kk^{\times} \mid 1 \leq i < j \leq d \}$ be a set of roots of unity, and set $p_{ji} = p^{-1}_{ij}$ and $p_{ii} = p_{jj}$ for all $i < j$. The skew polynomial ring is defined to be the algebra generated by $x_1, \dots, x_d$ subject to the relations $x_jx_i = p_{ij}x_ix_j$ for all $i < j$, and is denoted by $\kk_{p_{ij}}[x_1, \dots, x_d]$.
	
	Assume that $\prod^d_{j=1} p_{ji} = 1$ for all $i = 1, \dots, d$. Hence the Nakayama automorphism is the identity map by \cite[Proposition 4.1]{LWW2014}.
	Let $Z$ be the center of $\kk_{p_{ij}}[x_1, \dots, x_d]$. It is easy to see that $G := \GrAut_Z(\kk_{p_{ij}}[x_1, \dots, x_d])$ is generated by $\sigma_1, \cdots, \sigma_d$, where $\sigma_i$ is defined by $\sigma_i(x_j) = p_{ij}x_j$.
	Since $Z = \kk_{p_{ij}}[x_1, \dots, x_d]^{G}$, it follows that $Z$ has a NCCR by Theorem \ref{NCCR-thm}.
	In fact, $G$ can be seen as a subgroup of the automorphism group of polynomial ring $\kk[x_1, \dots, x_d]$. It is well known that the skew group algebra $\kk[x_1, \dots, x_d]\#G$ is a NCCR of the invariant subring $\kk[x_1, \dots, x_d]^G$ (see \cite[Example 1.1]{VdBer2004}).
\end{ex}

%


Obviously, not all of PI connected graded Calabi-Yau algebra satisfies the assumption of Theorem \ref{NCCR-thm} (2) (ii). See the following examples.

\begin{ex}
	Let $R$ be the down-up algebra $A(0, -1)$.
	By \cite[E 1.5.6]{LMZ2017}, the Nakayama automorphism of $R$ is the identity map.
	The center of $R$ is
	$$Z = \kk[d^4, u^4, dudu+udud, (du+\sqrt{-1}ud)^4, (du-\sqrt{-1}ud)^4].$$
	It is not difficult to see that
	$$\GrAut_{Z}(R) = \left\{ \begin{pmatrix} (\sqrt{-1})^i & 0 \\ 0 & \pm (\sqrt{-1})^{4-i} \end{pmatrix} \right\},$$
	and the invariant subring $R^{\GrAut_{Z}(R)} (= \kk[d^4, u^4, d^2u^2, dudu, udud] \neq Z)$ is a commutative algebra.
	Then  $R^{\GrAut_{Z}(R)}$ has a NCCR by Theorem \ref{NCCR-thm}.
\end{ex}

There also exists a connected graded Calabi-Yau algebra whose center doesn't have a NCCR (see \cite[Example 9.3]{SVdB2017}).

\begin{ex}
	With $\mathcal{P}_{m,n}$ we denote the graded polynomial algebra
	$$\kk[x_{ij}(l) \mid 1 \leq i, j \leq n; 1 \leq l \leq m],$$
	with $\deg(x_{ij}(l)) = 1$.
	The $\kk$-subalgebra of the matrix ring $M_n(\mathcal{P}_{m,n})$ generated by the matrices
	$$X_l = (x_{ij}(l))_{i, j}, \qquad \text{where } 1 \leq l \leq m,$$
	is called the ring of $m$ generic $n \times n$ matrices $\mathbb{G}_{m, n}$.
	The $\kk$-subalgebra of $M_n(\mathcal{P}_{m,n})$ generated by $\mathbb{G}_{m, n}$ and $\Tr(\mathbb{G}_{m, n})$ is the trace ring of $m$ generic $n \times n$ matrices and is denoted by $\mathbb{T}_{m, n}$.
	Note that both $\mathbb{G}_{m, n}$ and $\mathbb{T}_{m, n}$ are connected graded subalgebra of $M_n(\mathcal{P}_{m,n})$.
	
	
	By \cite[Example 9.3]{SVdB2017}, $\mathbb{T}_{3, 2}$ is a twisted NCCR of its center $Z_{3,2}$, but $Z_{3,2}$ doesn't have a NCCR. Hence
	By \cite[Example 9.3]{SVdB2017}, the center $Z_{3,2}$ of the Calabi-Yau algebra $\mathbb{T}_{3, 2}$ doesn't have a NCCR. Hence
	$$\mathbb{T}_{3, 2}^{\GrAut_{Z_{3,2}} (\mathbb{T}_{3, 2})} \neq Z_{3,2}.$$
\end{ex}

\section*{Acknowledgments}
The author thanks the referee for the valuable suggestions which improved the presentation of this article quite a lot. I'm also grateful to Professor Ellen Kirkman and James Zhang for helpful comments on a draft of this paper. 

\bibliographystyle{siam}
\bibliography{References}

\begin{thebibliography}{10}

\bibitem{ASZ1998}
{\sc K.~Ajitabh, S.~P. Smith, and J.~J. Zhang}, {\em Auslander-{G}orenstein
  rings}, Comm. Algebra, 26 (1998), pp.~2159--2180.

\bibitem{AB2007}
{\sc K.~Ardakov and K.~A. Brown}, {\em Primeness, semiprimeness and
  localisation in {I}wasawa algebras}, Trans. Amer. Math. Soc., 359 (2007),
  pp.~1499--1515.

\bibitem{AS1987}
{\sc M.~Artin and W.~F. Schelter}, {\em Graded algebras of global dimension
  {$3$}}, Adv. in Math., 66 (1987), pp.~171--216.

\bibitem{AZ1994}
{\sc M.~Artin and J.~J. Zhang}, {\em Noncommutative projective schemes},
  Advances in Mathematics, 109 (1994), pp.~228--287.

\bibitem{BHZ2019}
{\sc Y.~Bao, J.~He, and J.~J. Zhang}, {\em Pertinency of {H}opf actions and
  quotient categories of {C}ohen-{M}acaulay algebras}, Journal of
  Noncommutative Geometry, 13 (2019), pp.~667--710.

\bibitem{BHZ2018}
{\sc Y.-H. Bao, J.-W. He, and J.~J. Zhang}, {\em Noncommutative {A}uslander
  theorem}, Transactions of the American Mathematical Society, 370 (2018),
  pp.~8613--8638.

\bibitem{BCF1990}
{\sc J.~Bergen, M.~Cohen, and D.~Fischman}, {\em Irreducible actions and
  faithful actions of {H}opf algebras}, vol.~72, 1990, pp.~5--18.
\newblock Hopf algebras.

\bibitem{Buc2003}
{\sc R.-O. {Buchweitz}}, {\em {Morita contexts, idempotents, and Hochschild
  cohomology -- with applications to invariant rings.}}, in Commutative
  algebra. Interactions with algebraic geometry. Proceedings of the
  international conference, Grenoble, France, July 9--13, 2001 and the special
  session at the joint international meeting of the American Mathematical
  Society and the Soci\'et\'e Math\'ematique de France, Lyon, France, July
  17--20, 2001, Providence, RI: American Mathematical Society (AMS), 2003,
  pp.~25--53.

\bibitem{CKWZ2018}
{\sc K.~Chan, E.~Kirkman, C.~Walton, and J.~J. Zhang}, {\em Mc{K}ay
  correspondence for semisimple {H}opf actions on regular graded algebras,
  {I}}, Journal of Algebra, 508 (2018), pp.~512--538.

\bibitem{CKWZ2019}
{\sc K.~Chan, E.~Kirkman, C.~Walton, and J.~J. Zhang}, {\em Mc{K}ay
  correspondence for semisimple {H}opf actions on regular graded algebras.
  {II}}, Journal of Noncommutative Geometry, 13 (2019), pp.~87--114.

\bibitem{CKZ2020}
{\sc J.~Chen, E.~Kirkman, and J.~J. Zhang}, {\em Auslander's theorem for group
  coactions on noetherian graded down-up algebras}, Transformation Groups, 25
  (2020), pp.~1037--1062.

\bibitem{CE2017}
{\sc J.~Cuadra and P.~Etingof}, {\em Finite dimensional {H}opf actions on
  central division algebras}, Int. Math. Res. Not. IMRN,  (2017),
  pp.~1562--1577.

\bibitem{GKMW2019}
{\sc J.~Gaddis, E.~Kirkman, W.~F. Moore, and R.~Won}, {\em Auslander's theorem
  for permutation actions on noncommutative algebras}, Proceedings of the
  American Mathematical Society, 147 (2019), pp.~1881--1896.

\bibitem{KK2005}
{\sc E.~Kirkman and J.~Kuzmanovich}, {\em Fixed subrings of {N}oetherian graded
  regular rings}, J. Algebra, 288 (2005), pp.~463--484.

\bibitem{KKZ2009}
{\sc E.~Kirkman, J.~Kuzmanovich, and J.~J. Zhang}, {\em Gorenstein subrings of
  invariants under {H}opf algebra actions}, J. Algebra, 322 (2009),
  pp.~3640--3669.

\bibitem{KKZ2014}
\leavevmode\vrule height 2pt depth -1.6pt width 23pt, {\em Invariants of
  {$(-1)$}-skew polynomial rings under permutation representations}, in Recent
  advances in representation theory, quantum groups, algebraic geometry, and
  related topics, vol.~623 of Contemp. Math., Amer. Math. Soc., Providence, RI,
  2014, pp.~155--192.

\bibitem{Liu2005}
{\sc G.~Liu}, {\em A note on the global dimension of smash products}, Comm.
  Algebra, 33 (2005), pp.~2625--2627.

\bibitem{LWW2014}
{\sc L.~Liu, S.~Wang, and Q.~Wu}, {\em Twisted {C}alabi-{Y}au property of {O}re
  extensions}, J. Noncommut. Geom., 8 (2014), pp.~587--609.

\bibitem{LMZ2017}
{\sc J.-F. L\"{u}, X.-F. Mao, and J.~J. Zhang}, {\em Nakayama automorphism and
  applications}, Trans. Amer. Math. Soc., 369 (2017), pp.~2425--2460.

\bibitem{MR2001}
{\sc J.~C. McConnell and J.~C. Robson}, {\em Noncommutative {N}oetherian
  rings}, vol.~30 of Graduate Studies in Mathematics, American Mathematical
  Society, Providence, RI, revised~ed., 2001.
\newblock With the cooperation of L. W. Small.

\bibitem{Mon1993}
{\sc S.~Montgomery}, {\em Hopf algebras and their actions on rings}, vol.~82 of
  CBMS Regional Conference Series in Mathematics, Published for the Conference
  Board of the Mathematical Sciences, Washington, DC; by the American
  Mathematical Society, Providence, RI, 1993.

\bibitem{Mor2013}
{\sc I.~Mori}, {\em Mc{K}ay-type correspondence for {AS}-regular algebras}, J.
  Lond. Math. Soc. (2), 88 (2013), pp.~97--117.

\bibitem{NO1982}
{\sc C.~N\u{a}st\u{a}sescu and F.~van Oystaeyen}, {\em Graded ring theory},
  vol.~28 of North-Holland Mathematical Library, North-Holland Publishing Co.,
  Amsterdam-New York, 1982.

\bibitem{QWZ2019}
{\sc X.-S. Qin, Y.-H. Wang, and J.~J. Zhang}, {\em Noncommutative
  quasi-resolutions}, J. Algebra, 536 (2019), pp.~102--148.

\bibitem{Rei2003}
{\sc I.~Reiner}, {\em Maximal orders}, vol.~28 of London Mathematical Society
  Monographs. New Series, The Clarendon Press, Oxford University Press, Oxford,
  2003.
\newblock Corrected reprint of the 1975 original, With a foreword by M. J.
  Taylor.

\bibitem{Rob1995}
{\sc D.~J.~S. {Robinson}}, {\em {A course in the theory of groups.}}, vol.~80,
  New York, NY: Springer-Verlag, 1995.

\bibitem{SVO2006}
{\sc S.~Skryabin and F.~Van~Oystaeyen}, {\em The {G}oldie {T}heorem for
  {$H$}-semiprime algebras}, J. Algebra, 305 (2006), pp.~292--320.

\bibitem{Skr2018}
{\sc S.~M. Skryabin}, {\em Subrings of invariants for actions of
  finite-dimensional {H}opf algebras}, in Proceedings of the {S}eminar of the
  {K}azan' ({V}olga {R}egion) {F}ederal {U}niversity {D}epartment of {A}lgebra
  and {M}athematical {L}ogic ({R}ussian), vol.~158 of Itogi Nauki Tekh. Ser.
  Sovrem. Mat. Prilozh. Temat. Obz., Vseross. Inst. Nauchn. i Tekhn. Inform.
  (VINITI), Moscow, 2018, pp.~40--80.

\bibitem{SVdB2008}
{\sc J.~T. {Stafford} and M.~{Van den Bergh}}, {\em {Noncommutative resolutions
  and rational singularities}}, {Mich. Math. J.}, 57 (2008), pp.~659--674.

\bibitem{SZ1994}
{\sc J.~T. Stafford and J.~J. Zhang}, {\em Homological properties of (graded)
  {N}oetherian {${\rm PI}$} rings}, J. Algebra, 168 (1994), pp.~988--1026.

\bibitem{VdB1997}
{\sc M.~van~den Bergh}, {\em Existence theorems for dualizing complexes over
  non-commutative graded and filtered rings}, J. Algebra, 195 (1997),
  pp.~662--679.

\bibitem{VdBer2004}
\leavevmode\vrule height 2pt depth -1.6pt width 23pt, {\em Non-commutative
  crepant resolutions}, in The legacy of {N}iels {H}enrik {A}bel, Springer,
  Berlin, 2004, pp.~749--770.

\bibitem{SVdB2017}
{\sc \v{S}pela {\v{S}penko} and M.~{Van den Bergh}}, {\em {Non-commutative
  resolutions of quotient singularities for reductive groups}}, {Invent.
  Math.}, 210 (2017), pp.~3--67.

\bibitem{WZ2022}
{\sc Q.~Wu and R.~Zhu}, {\em Homological determinants and {N}akayama
  automorphisms of {T}akeuchi smash products}, in preparation.

\bibitem{Yek1992}
{\sc A.~Yekutieli}, {\em Dualizing complexes over noncommutative graded
  algebras}, J. Algebra, 153 (1992), pp.~41--84.

\bibitem{YZ1999}
{\sc A.~Yekutieli and J.~J. Zhang}, {\em Rings with {A}uslander dualizing
  complexes}, J. Algebra, 213 (1999), pp.~1--51.

\bibitem{Zha1997}
{\sc J.~J. Zhang}, {\em Connected graded {G}orenstein algebras with enough
  normal elements}, Journal of Algebra, 189 (1997), pp.~390--405.

\bibitem{Zha1999}
{\sc K.~Zhao}, {\em Centers of down-up algebras}, J. Algebra, 214 (1999),
  pp.~103--121.

\end{thebibliography}

\end{document}